\newtheorem{theorem}{Theorem}
\newtheorem{corollary}[theorem]{Corollary}
\newtheorem{definition}[theorem]{Definition}
\newtheorem{lemma}[theorem]{Lemma}
\newtheorem{proposition}[theorem]{Proposition}
\newtheorem{remark}[theorem]{Remark}
\newtheorem{example}[theorem]{Example}
\newcommand{\bF}{\overline{F}}
\newcommand{\bG}{\overline{G}}
\newcommand{\IFR}{{\rm c}}
\newcommand{\DFR}{{\rm DFR}}
\newcommand{\IFRA}{\ensuremath{\ast}}
\newcommand{\DFRA}{{\rm DFRA}}
\newcommand{\dI}{\ensuremath{\mathbb{I}}}
\title[Non comparability with respect to the convex transform order]{Non comparability with respect to the convex transform order with applications\thanks{This work was partially supported by the Centre for Mathematics of the University of Coimbra -- UID/MAT/00324/2013, funded by the Portuguese Government through FCT/MEC and co-funded by the European Regional Development Fund through the Partnership Agreement PT2020.}}
\email{idir.bhh@gmail.com}
\email{MHadjikyriakou@uclan.ac.uk}
\email{paulo@mat.uc.pt}
\keywords{convex transform order; failure rate; rapidly varying functions.}
\subjclass{60E15, 60E05, 62N05}
\date{July, 2019}
\date{}
\begin{document}

\maketitle

\begin{abstract}
In the literature of stochastic orders, one rarely finds results that can be considered as criteria for the non-comparability of random variables. In this paper, we provide results that enable researchers to use simple tools to conclude that two random variables are not comparable with respect to the convex transform order. The criteria are applied to prove the non-comparability of parallel systems with components that are either exponential, Weibull or  Gamma distributed, providing a negative answer for a conjecture about comparability with respect to the convex transform order in a much broader scope than its initial statement.
\end{abstract}

\section{Introduction}
Failure rates functions provide a natural way to describe ageing properties of lifetime distributions. The monotonicity of the failure rate studied, among others, by Barlow and Proschan~\cite{BP-81}, Patel~\cite{Pt-83}, Sengupta~\cite{Sp-94}, El-Bassiouny~\cite{ElB-03} or, for a more recent account of results, by Shaked and Shanthikumar~\cite{SS07} or Marshall and Olkin~\cite{MO07}, is amidst the first relevant properties of interest as this gives an easily interpretable evolution of the trend of the ageing process. The comparison of this evolution for complex systems whose lifetimes are random arises naturally, once we have defined a criterium to measure this ageing. Being interested in the properties of the failure rate, the comparison of different systems should be based on the long run behaviour of the corresponding failure rate functions. Such comparisons of ageing properties may be thought of as orderings between the lifetime distributions, for which there are several alternative notions available in the literature. 

The specific order relation we will be considering was introduced by van Zwet~\cite{VZ-64} and is known either as the convex transform order (as in Shaked and Shanthikumar~\cite{SS07} or Kochar and Xu~\cite{KX09,KX11}) or as the increasing failure rate order (as in Averous and Meste~\cite{AM89}, Fagiuoli and Pellerey~\cite{FP93}, Nanda et al.~\cite{ASOK}, Arab and Oliveira~\cite{AO18,AO18a} or Arab et al.~\cite{AHO18,AHO19}). As expressed by Definition~\ref{DEF S-IFR} below, the convex transform order is defined through the relative convexity between the quantile functions. This notion of relative convexity between two functions has been introduced in Hardy et al.~\cite{HarLittPol59} and is, in general, characterized by properties of the quotient between the derivatives of the functions (see Rajba~\cite{Raj14} for some recent results). Computationally this is a difficult relationship to be established as in most cases the quantile functions do not have closed representations. However, the convex transform order may be adequately expressed through a control on the number of intersection points of the graphical representations of the distribution functions 
subject to the effect of zooming or shifting on the elapsing of time. This approach has been explored in Arab and Oliveira~\cite{AO18,AO18a} and Arab et al.~\cite{AHO18,AHO19} to derive explicit order relations within the Gamma and Weibull families of distributions and between these two families. Applications to ageing comparisons of parallel systems have been discussed by Kochar and Xu~\cite{KX09,KX11} and Arab et al.~\cite{AHO18,AHO19}. The lifetime of parallel systems, described by the maximum of the lifetimes of each component, means that we need to consider distribution functions for which either it is not possible to obtain the inverse or this inverse is very difficult to be studied, even in the case of exponentially distributed components. The results that are the motivation for this work, stem from the comparison between parallel systems. The ordering with respect to the convex transform order of homogeneous parallel systems of exponentially distributed components with non-homogenous ones was proved by Kochar and Xu~\cite{KX09,KX11} showing that homogeneous parallel systems age faster than nonhomogeneous systems. Using a completely different approach, Arab et al.~\cite{AHO18} extended this ordering relationship to a broader framework. The comparison of parallel systems with exponentially distributed components, each one with a different hazard rate, was discussed in Kochar and Xu~\cite{KX09}, conjecturing, based on numerical evidence, that the same convex transform order should hold under suitable relationships between the hazard rates of the components. 
The conjectured ordering was proved to be false by Arab et al.~\cite{AHO19}, were it is shown that parallel systems with two components, with hazard rates satisfying the adequate relationship, are non-comparable by providing a general way to build counterexamples. However, as proved by Kochar and Xu~\cite{KX11} the ordering relationship holds with respect to a weaker form of transform order, namely the star transform order. Arab et al.~\cite{AHO19} provided a simpler proof of this latter ordering.

The results and arguments for the proofs introduced in Arab et al.~\cite{AHO19} suggested looking for the identification of relationships between the distributions functions leading to the non-comparability results to be discussed below. These results are then applied to obtain the non-comparability with respect to the convex transform order of more general parallel systems, allowing to consider systems with arbitrarily large number of components that may even have lifetime distributions that are Gamma or Weibull distributed, which was out of scope of the applicability of the methodology used in Arab et al.~\cite{AHO18,AHO19}.

The paper is structured as follows. In Section~2 we provide some definitions and results that will be useful for the sequel. In Section~3, orderings of parallel systems with homogeneous components are studied while in Section~4 we provide some criteria for identifying non-comparable parallel systems. A subclass of rapidly varying functions is employed in Section~5 in order to get results that will allow us to discuss non-comparability of random variables with respect to the convex transform order and finally, a number of applications is presented in Section~6.

\section{Preliminaries}
Let $X$ be a nonnegative random variable with density function $f_X$, distribution function $F_X$, and tail function $\overline{F}_X=1-F_X$. Moreover, recall that, for each $x\geq 0$, the failure rate function of $X$ is given by $r_{X}(x)=\frac{f_X(x)}{1-F_X(x)}$. Two of the most simple and common ageing notions are defined in terms of the failure rate function. Their definitions are given below. 
\begin{definition}
Let $X$ be a nonnegative valued random variable.
\begin{enumerate}
\item
$X$ is said {\rm IFR} (resp., \DFR) if $r_{X}$ is increasing (resp., decreasing) for $x\geq 0$.
\item
$X$ is said {\rm IFRA} (resp., $\DFRA$) if $\frac{1}{x}\int_0^x r_{X,s}(t)\,dt$ is increasing (resp., decreasing) for $x>0$.
\end{enumerate}
\label{def:s-fail}
\end{definition}
The above definitions refer to failure rate monotonicity properties of the distribution. Next, we recall the transform orders defining the comparison between distribution functions.
\begin{definition}
\label{DEF S-IFR}
Let $\mathcal{F}$ denote the family of distributions functions such that $F(0)=0$. Let $X$ and $Y$ be nonnegative random variables with distribution functions $F_X,F_Y\in\mathcal{F}$.
    \begin{enumerate}
    \item
    The random variable $X$ is said to be smaller than $Y$ in the convex transform order, and we write $X\leq_{\IFR}Y$, if $\overline{F}_Y^{-1}(\overline{F}_X(x))$ is convex.
    \item
    The random variable $X$ is said to be smaller than $Y$  in the star transform order, and we write $X\leq_{\IFRA}Y$, if $\frac{\overline{F}_Y^{-1}(\overline{F}_X(x))}{x}$ is increasing (this is also known as $\overline{F}_Y^{-1}(\overline{F}_X(x))$ being star-shaped).
    \end{enumerate}
\end{definition}
\begin{remark}
Some notational issues and an alternative formulation of the previous definition.
    \begin{enumerate}
    \item
    The convex and star transform orders are also known as {\rm IFR} and {\rm IFRA} orders, respectively (see, for example, Nanda et al.~\cite{ASOK}).
    \item
    $X$ being smaller, in either sense, is often read as $X$ ageing faster than $Y$.
    \item
    As $F_Y^{-1}(F_X(x))=\overline{F}_Y^{-1}(\overline{F}_X(x))$, Definition~\ref{DEF S-IFR} may also be presented referring to the distribution functions instead of the tails.
    \end{enumerate}
\end{remark}

A general characterization of the above transform order relations is given below (see Propositions~3.1 and 4.1 in Nanda et al.~\cite{ASOK}).
\begin{theorem}
\label{convexity-equivalence}
Let $X$ and $Y$ be random variables with distribution functions $F_X,F_Y\in\mathcal{F}$.
    \begin{enumerate}
    \item
    $X\leq_{\IFRA}Y$ if and only if for any real number $a$, $\bF_{Y}(x)-\bF_{X}(ax)$ changes sign at most once, and if the change of signs occurs, it is in the order ``$-,+$'', as $x$ traverses from $0$ to $+\infty$.
    \item
    $X\leq_{\IFR}Y$ if and only if for any real numbers $a$ and $b$, the function $V(x)=\bF_{Y}(x)-\bF_{X}(ax+b)$ changes sign at most twice, and if the change of signs occurs twice, it is in the order ``$+,-,+$'', as $x$ traverses from $0$ to $+\infty$.
    \end{enumerate}
\begin{remark}
\label{rem:imply}
It is obvious from Theorem~\ref{convexity-equivalence} that if $X\leq_{\IFR}Y$, then we also have that $X\leq_{\IFRA}Y$.
\end{remark}
\begin{remark}
\label{AOalpha}
As mentioned in Remark 25 in Arab and Oliveira~\cite{AO18,AO18a}, it is enough to verify the above characterizations for $a>0$.
\end{remark}
\end{theorem}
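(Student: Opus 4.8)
The plan is to reduce both equivalences to the elementary characterisations of convex and star-shaped functions through sign changes of their differences with affine (resp. linear) functions, after transporting everything to the transform function itself. Write $\phi(x)=\bF_Y^{-1}(\bF_X(x))$ for the transform appearing in Definition~\ref{DEF S-IFR}. Since $\bF_X$ and $\bF_Y$ are decreasing and $\bF_Y^{-1}$ is therefore decreasing as well, $\phi$ is increasing, and $\phi(0)=\bF_Y^{-1}(\bF_X(0))=\bF_Y^{-1}(1)=0$. By definition, $X\leq_{\IFR}Y$ means $\phi$ is convex and $X\leq_{\IFRA}Y$ means $\phi(x)/x$ is increasing. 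Because $\phi$ is an increasing bijection, it is cleaner to argue through its inverse $\psi=\phi^{-1}=\bF_X^{-1}(\bF_Y(\cdot))$: convexity of $\phi$ is equivalent to concavity of $\psi$, and $\phi(x)/x$ increasing is equivalent to $\psi(x)/x$ decreasing, since writing $y=\phi(x)$ gives $\psi(y)/y=x/\phi(x)=(\phi(x)/x)^{-1}$, which is decreasing in $x$ and hence in $y$.

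The bridge to the functions $V$ in the statement is a single computation. For fixed $a>0$ and $b$, applying the decreasing function $\bF_X^{-1}$ to the inequality defining the sign of $V$ gives
\[
V(x)=\bF_Y(x)-\bF_X(ax+b)\ge 0 \iff \bF_X^{-1}(\bF_Y(x))\le ax+b \iff \psi(x)\le ax+b .
\]
Hence $V(x)$ and $\psi(x)-(ax+b)$ have opposite signs at every point, so the sign-change sequence of $V$ is the reverse-signed sequence of $\psi(x)-(ax+b)$. Throughout, Remark~\ref{AOalpha} lets me restrict to $a>0$.

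For part~(2), I would invoke the classical fact that a continuous function $\psi$ is concave if and only if, for every affine $\ell(x)=ax+b$, the concave difference $\psi-\ell$ changes sign at most twice and, when it does so twice, in the order ``$-,+,-$''. By the bridge identity this says exactly that $V$ changes sign at most twice in the order ``$+,-,+$'', which together with the equivalence ``$\phi$ convex $\iff\psi$ concave'' yields the characterisation of $X\leq_{\IFR}Y$. For part~(1), I would take $b=0$: $\psi(x)/x$ is decreasing if and only if, for every $a>0$, $\psi(x)-ax$ changes sign at most once and in the order ``$+,-$''; reversing signs through the bridge identity turns this into ``$-,+$'' for $V(x)=\bF_Y(x)-\bF_X(ax)$, and combined with ``$\phi(x)/x$ increasing $\iff\psi(x)/x$ decreasing'' gives the characterisation of $X\leq_{\IFRA}Y$.

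The main obstacle is the backward direction of part~(2): upgrading ``$V$ has at most two sign changes in the order $+,-,+$ for every affine $\ell$'' back to convexity of $\phi$. The forward direction is immediate, since $\psi-\ell$ is concave and the set where a concave function is positive is an interval, forcing the pattern ``$-,+,-$''; but the converse carries the genuine content of the chord characterisation of concavity and requires some care with the regularity of $\psi$ (continuity and strict monotonicity of the tails, so that $\phi,\psi$ are well defined and continuous and the sign bookkeeping is unambiguous at the crossing points). Beyond this, the proof is just careful tracking of the sign flips introduced by composing with the decreasing tail functions and their inverses.
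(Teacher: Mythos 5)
There is nothing in the paper to compare your argument against: the paper does not prove Theorem~\ref{convexity-equivalence} at all, but imports it from Propositions~3.1 and~4.1 of Nanda et al.~\cite{ASOK}. Judged on its own, your proposal is correct and is the natural self-contained route. The effective move is passing to the inverse $\psi=\phi^{-1}=\bF_X^{-1}\circ\bF_Y$, which keeps the comparison functions affine: since $\bF_X^{-1}$ is decreasing, $V(x)\geq 0$ iff $\psi(x)\leq ax+b$, so $V$ and $\psi-\ell$ have opposite signs pointwise, and the patterns ``$-,+$'' and ``$+,-,+$'' for $V$ translate exactly into the characterisations ``$+,-$'' for $\psi(x)-ax$ (equivalent to $\psi(x)/x$ decreasing, via the factorisation $\psi(x)-ax=x\bigl(\psi(x)/x-a\bigr)$) and ``$-,+,-$'' for $\psi-\ell$ (equivalent to $\psi$ concave); the reductions $\phi$ convex $\Leftrightarrow$ $\psi$ concave and $\phi(x)/x$ increasing $\Leftrightarrow$ $\psi(x)/x$ decreasing are valid for increasing bijections fixing $0$. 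Two points deserve tightening rather than a wave. First, the converse of your ``classical fact'' has a one-line proof you should include: if $\psi$ fails concavity at $x_1<t<x_2$, lower the chord through $\bigl(x_1,\psi(x_1)\bigr)$ and $\bigl(x_2,\psi(x_2)\bigr)$ by a small $\varepsilon>0$ to force the forbidden values $+,-,+$ at $x_1,t,x_2$; note that this chord automatically has slope $a>0$ because $\psi$ is increasing, which is precisely why Remark~\ref{AOalpha} suffices for the backward implications. Second, Remark~\ref{AOalpha} concerns what must be \emph{verified}, not what must be \emph{proved}: in the forward direction you still owe the case $a\leq 0$, which is however trivial, since then $x\mapsto\bF_X(ax+b)$ is nondecreasing, so $V$ is nonincreasing and can only show the admissible pattern ``$+,-$'' (for part~(1), $V\leq 0$ throughout). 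With the convention $\bF_X\equiv 1$ on the negative half-line, your bridge identity also survives where $ax+b<0$, and the remaining regularity caveats you flag (generalized inverses, flat pieces of the tails) are the same ones implicit in \cite{ASOK}.
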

The actual verification of the sign variation required by Theorem~\ref{convexity-equivalence} is often not directly achievable.
Computationally tractable alternatives with applications were studied in Arab and Oliveira~\cite{AO18,AO18a} and Arab et al.~\cite{AHO18} (see, Theorems~2.3 and 2.4 in the latter reference). As stated in Theorem~\ref{thm:newcrit} below, establishing first the star transform ordering simplifies the condition to be verified for the convex transform order. 
\begin{theorem}[Theorem~29 in Arab et al.~\cite{AHO18}]
\label{thm:newcrit}
Let $X$ and $Y$ be random variables with distribution functions $F_X,F_Y\in\mathcal{F}$, respectively. If $X\leq_{\IFRA}Y$ and the criterium in (\textit{2}) from Theorem~\ref{convexity-equivalence} is verified for $b\geq0$, then $X\leq_{\IFR}Y$.
\end{theorem}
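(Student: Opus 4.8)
The plan is to verify condition~(\textit{2}) of Theorem~\ref{convexity-equivalence}: for all real $a,b$ the function $V(x)=\bF_Y(x)-\bF_X(ax+b)$ must change sign at most twice, in the order ``$+,-,+$'' when it changes exactly twice. By Remark~\ref{AOalpha} only $a>0$ needs to be considered, and the hypothesis already grants this for every $b\geq0$; the entire task is therefore to handle $a>0$ together with $b<0$. The decisive preliminary remark is that the sign of $V$ is controlled by a single auxiliary function. Setting $\phi=\bF_Y^{-1}\circ\bF_X$ and applying the decreasing map $\bF_Y^{-1}$, one sees that $V(x)$ and $\phi(ax+b)-x$ share the same sign, and after the increasing change of variable $t=ax+b$ (allowed since $a>0$) this is the sign of $\phi(t)-(t-b)/a$, i.e. the sign of $\phi$ relative to the line $L(t)=\frac1a t-\frac ba$, whose slope $1/a$ is positive and whose intercept $-b/a$ is strictly positive precisely because $b<0$.

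Now I would exploit that $X\leq_{\IFRA}Y$ means, by Definition~\ref{DEF S-IFR}, that $\phi(x)/x$ is increasing, i.e. $\phi$ is star-shaped. Two regimes arise. For $0<x<-b/a$ we have $ax+b<0$, hence $\bF_X(ax+b)=1$ and $V(x)=\bF_Y(x)-1\leq0$, so $V$ begins nonpositive. For $x>-b/a$, equivalently $t>0$, the sign of $V$ equals that of
\[
\frac{\phi(t)-L(t)}{t}=\frac{\phi(t)}{t}-\Bigl(\frac1a-\frac{b}{a\,t}\Bigr),
\]
in which $\phi(t)/t$ is nondecreasing while $t\mapsto \frac1a-\frac{b}{at}$ is strictly decreasing (here $b<0$ is used again). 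Their difference is strictly increasing and tends to $-\infty$ as $t\downarrow0$, so it vanishes at most once and changes sign only from ``$-$'' to ``$+$''. Equivalently, a line lying strictly above $\phi(0)=0$ at the origin can meet a star-shaped graph at most once, since two intersection points $t_1<t_2$ would give $\phi(t_1)/t_1=\frac1a-\frac{b}{at_1}>\frac1a-\frac{b}{at_2}=\phi(t_2)/t_2$, against the monotonicity of $\phi(t)/t$.

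Combining the two regimes, for every $a>0$ and $b<0$ the function $V$ is nonpositive near the origin and undergoes at most one sign change, in the order ``$-,+$''; in particular it changes sign at most twice and, since it never does so twice, the ``$+,-,+$'' clause holds vacuously. With the reduction to $a>0$ and the assumed validity of criterion~(\textit{2}) for $b\geq0$, condition~(\textit{2}) of Theorem~\ref{convexity-equivalence} is thereby established for all admissible $a$ and $b$, giving $X\leq_{\IFR}Y$. I expect the only real obstacle to be the $b<0$ analysis, and within it the recognition that a strictly positive intercept forces $V$ to start negative, so that the pattern one must exclude is ``$-,+,-$''; star-shapedness alone disposes of this by forbidding a second crossing. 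A secondary point deserving a line of care is the standing regularity ensuring $\bF_Y^{-1}$ is well defined and that $\phi$ cannot coincide with $L$ along an interval, but the strict monotonicity in the displayed difference settles the latter automatically.
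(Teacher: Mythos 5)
Your proposal is correct, and it is complete. Note that the paper itself offers no proof to compare against: the statement is quoted as Theorem~29 of Arab et al.~\cite{AHO18} and imported without argument, so your reconstruction has to stand on its own --- and it does. The reduction is the right one: Remark~\ref{AOalpha} confines attention to $a>0$, the hypothesis covers $b\geq0$, and the only content is $a>0$, $b<0$. Your two key observations are both sound: first, for $0<x<-b/a$ one has $\bF_X(ax+b)=1$, so $V(x)=\bF_Y(x)-1\leq0$ and $V$ cannot start positive; second, for $t=ax+b>0$ the sign of $V$ is that of $\frac{\phi(t)}{t}-\bigl(\frac1a-\frac{b}{at}\bigr)$, where star-shapedness makes $\phi(t)/t$ nondecreasing while $b<0$ makes the subtracted term strictly decreasing, so the difference is strictly increasing, diverges to $-\infty$ as $t\downarrow0$, and has at most one zero, giving the sign pattern ``$-,+$'' at worst. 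Since one sign change (in either order) is admissible under criterion~(\textit{2}) of Theorem~\ref{convexity-equivalence} --- the ``$+,-,+$'' clause binds only when there are exactly two changes --- the case $b<0$ is settled and $X\leq_{\IFR}Y$ follows. Your closing caveat about $\bF_Y^{-1}$ is handled at the same level of informal regularity as the paper itself (generalized inverses and possible flat pieces of $\bF_Y$ are glossed over throughout), so it is not a gap relative to the ambient standard of rigor; the strict monotonicity of the displayed difference indeed rules out $\phi$ agreeing with the line on an interval.
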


The convex and the star transform orders fall in the family of iterated {\rm IFR} and {\rm IFRA} orders, respectively, introduced and studied in Nanda et al.~\cite{ASOK}, Arab and Oliveira~\cite{AO18,AO18a} or Arab et al.~\cite{AHO18}. It is useful to recall that Nanda et al.~\cite{ASOK} proved that the iterated {\rm IFR} and {\rm IFRA} orderings define partial order relations in the equivalence classes of $\mathcal{F}$ corresponding to the equivalence relation $F\sim G$ defined by $F(x)=G(kx)$, for some $k>0$. In case of families of distributions that have a scale parameter, this allows to choose the parameter in the most convenient way. 

Finally, we introduce some notation to be used throughout our results.
\begin{definition}
Let $X_1,\ldots,X_n$ be a sample of the random variable $X$. The order statistics of the sample are denoted by $X_{1:n}\leq X_{2:n}\leq\cdots\leq X_{n:n}$.
\end{definition}

\section{Parallel systems with homogeneous components}
Comparing the ageing properties of parallel systems based on homogeneous components with exponentially distributed lifetimes was a byproduct of the results proved in the final section of Arab et al.~\cite{AHO18}. Indeed, Corollary~40 in \cite{AHO18} proves that such parallel systems age faster with respect to the convex transform order as the number of components increases. This comparability of the ageing behaviour with respect to each of the transform orders defined may show a different pattern depending on the components lifetime. The following results describe the ordering relations for the case of Weibull distributed components, where, opposite to the result proved in Corollary~40 in Arab et al.~\cite{AHO18}, a different behaviour with respect to each of the orders is found. Moreover, this extends Theorem~3.1 in Kochar and Xu~\cite{KX11}, going beyond exponentially distributed components.
\begin{proposition}
\label{example_Max_WEIBULL}
Let $X_1,\ldots,X_n$ 
be independent and identically distributed Weibull random variables with distribution functions $F(x)=1-e^{-x^\alpha}$, for $x\geq 0$, where $\alpha>0$.
Given integers $m\leq k\leq n$, we have $X_{k:k}\leq_{\IFRA}X_{m:m}$
\end{proposition}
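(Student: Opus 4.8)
The plan is to use the star-order characterization in part (1) of Theorem~\ref{convexity-equivalence}, which says that $X_{k:k}\leq_{\IFRA}X_{m:m}$ holds if and only if, for every $a>0$ (invoking Remark~\ref{AOalpha}), the function
\[
\Delta(x)=\overline{F}_{X_{m:m}}(x)-\overline{F}_{X_{k:k}}(ax)
\]
changes sign at most once, and if it does, in the order ``$-,+$'' as $x$ runs from $0$ to $\infty$. Since the $X_i$ are i.i.d.\ with common distribution $F$, the maximum $X_{j:j}$ has distribution function $F(x)^j$, so its tail is $\overline{F}_{X_{j:j}}(x)=1-(1-e^{-x^\alpha})^{j}$. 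The first step is therefore to write $\Delta$ explicitly and to reduce the sign-change question to a more tractable monotonicity statement.

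\emph{Reduction to a one-variable monotone function.} Rather than tracking the sign of $\Delta$ directly, I would exploit the fact that $t\mapsto 1-t^j$ is strictly decreasing on $[0,1]$. The sign of $\Delta(x)$ is governed by the comparison of $F(x)^{m}$ and $F(ax)^{k}$, equivalently of $m\log F(x)$ and $k\log F(ax)$. Writing $u=x^\alpha$ so that $F(x)=1-e^{-u}$, the crux becomes the behaviour of the ratio
\[
g(x)=\frac{\log\bigl(1-e^{-(ax)^\alpha}\bigr)}{\log\bigl(1-e^{-x^\alpha}\bigr)}.
\]
The claimed single ``$-,+$'' crossing of $\Delta$ translates into showing that, for fixed $a>0$, the equation $m\,\log F(x)=k\,\log F(ax)$ has at most one solution in $(0,\infty)$, with the sign pattern forced by the boundary behaviour. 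This is where the monotonicity of $g$ (or of an associated log-ratio) does the work: if $g$ is monotone, the line $m/k$ can be crossed at most once.

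\emph{Boundary behaviour and orientation of the crossing.} I would then pin down the limits of $\Delta$ at the two ends. As $x\to 0^+$, both tails tend to $1$, and a Taylor expansion of $1-(1-e^{-x^\alpha})^{j}\approx 1-(x^\alpha)^{j}$ shows which tail decays faster near the origin, fixing the initial sign of $\Delta$; as $x\to\infty$, both tails vanish and the exponential rates $e^{-x^\alpha}$ versus $e^{-(ax)^\alpha}$ determine the terminal sign. Because $m\le k$, the homogeneous maximum over $k$ components ages faster, and these two boundary computations should deliver the initial ``$-$'' and terminal ``$+$''; combined with the at-most-one-crossing from the monotonicity step, this yields exactly the ``$-,+$'' pattern required by part (1) of Theorem~\ref{convexity-equivalence}.

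\emph{Main obstacle.} The delicate point is the monotonicity of $g$ (equivalently, establishing that the relevant log-ratio is strictly monotone for every $a>0$ and every $\alpha>0$), since this is what guarantees at most one sign change; handling the case $a=1$ separately may be convenient, and one should check that degenerate cases $m=k$ are trivial. I expect the argument for monotonicity to require a careful sign analysis of a derivative, possibly reducing to showing that an auxiliary function built from $e^{-x^\alpha}$ and $e^{-(ax)^\alpha}$ keeps a constant sign; this computation, rather than the structural reduction, is the technical heart of the proof.
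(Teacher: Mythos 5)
Your structural reduction is sound and in fact parallels the paper's own opening moves---the paper also passes to logarithms, comparing $k\log F(x)$ with $m\log F(ax)$---but there is a genuine gap exactly where you flag the ``technical heart'': the monotonicity of $g(x)=\log F(ax)/\log F(x)$ is never proved, only hoped for, and without it nothing bounds the number of crossings of the level $m/k$. That monotonicity is the \emph{entire} analytic content of the statement (after substituting $t=\log x$, $s=\log a$, it amounts to concavity of $t\mapsto\log\bigl(-\log\bigl(1-e^{-e^{\alpha t}}\bigr)\bigr)$, equivalently to $w\mapsto w\big/\bigl((e^{w}-1)\bigl(-\log(1-e^{-w})\bigr)\bigr)$ being increasing), and proving it ``by a careful sign analysis of a derivative'' is precisely the nontrivial computation one must carry out. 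The paper closes this step with a different and cleaner device: it differentiates the log-difference $H_1(x)=k\log(1-e^{-x^\alpha})-m\log(1-e^{-a^\alpha x^\alpha})$ and observes that, in the variable $u=x^\alpha$, the sign of $H_1^\prime$ is that of the three-term exponential polynomial $ke^{-u}-ma^\alpha e^{-a^\alpha u}+(ma^\alpha-k)e^{-(1+a^\alpha)u}$; by Tossavainen's theorem~\cite{Toss07} such a polynomial has at most two real roots, and since it vanishes at $u=0$, at most one root lies in $(0,\infty)$, which forces $H_1^\prime$ to have sign variation ``$-$'' or ``$+,-$'' and hence $H_1$ the variation at most ``$-,+$'' given the boundary limits. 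If you want to complete your route instead, note that your ratio-monotonicity claim is essentially the Saunders--Moran criterion of Lemma~\ref{lem:IFRA} applied to $F_n=F^n$, i.e., monotonicity in $x$ of $D(n,x)=\frac{F(x)\log F(x)}{nxf(x)}$---but that derivative analysis still has to be done; as submitted, the proposal is a plan with its central step missing.

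A secondary slip: you assert unconditionally that the boundary computations deliver initial ``$-$'' and terminal ``$+$'' for $\Delta(x)=\overline{F}_{X_{m:m}}(x)-\overline{F}_{X_{k:k}}(ax)$. The initial sign is indeed always ``$-$'' (near $0$, $\Delta\approx(ax)^{k\alpha}-x^{m\alpha}<0$), but at infinity $\Delta\approx me^{-x^\alpha}-ke^{-(ax)^\alpha}$, which is \emph{negative} whenever $a\leq1$; in your parametrization a crossing can only occur for $a>1$, and for $a\leq1$ the correct conclusion is that $\Delta<0$ throughout (no sign change at all---which your monotone-$g$ mechanism would in fact deliver, since then $g\geq1>m/k$). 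The paper handles this by remarking that $F^m>F^k$ and restricting attention to the regime where a crossing is possible. The slip is harmless once the monotonicity is established, but as stated the unconditional ``terminal $+$'' claim is false for half the range of $a$.
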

\begin{proof}
Denote by $F_k(x)=F^k(x)$ and $F_m(x)=F^m(x)$ the distribution functions of $X_{k:k}$ and $X_{m:m}$, respectively. We need to prove that
$$
C_{k,m}(x)=F_m^{-1}(F_k(x))=\left(-\ln\left(1-\left(1-e^{-x^{\alpha}}\right)^{\frac{k}{m}}\right)\right)^{\frac{1}{\alpha}}
$$
is star-shaped. This is equivalent to verifying that $H(x)=F_k(x)-F_m(ax)$ changes sign at most once in the order ``$-,+$'', when $x$ goes from 0 to $+\infty$, for every $a>0$. In fact, as $F_m(x)> F_k(x)$, it is enough to consider $0<a<1$. As the logarithmic function is an increasing transformation, $H(x)$ has the same sign variation as
$$
H_1(x)=k\log(1-e^{-x^\alpha})-m\log(1-e^{-a^\alpha x^\alpha}).
$$
It is easily verified that $\lim_{x\rightarrow 0}H_1(x)=-\infty$ and $\lim_{x\rightarrow+\infty}H_1(x)=k-m>0$. To characterize the behaviour of $H_1$ we look at its derivative. After simple algebraic manipulation, the sign of $H_1^\prime$ is easily seen to be the same as the sign of
$$
H_1^*(x)=-ma^\alpha e^{-a^\alpha x^\alpha}+ke^{-x^\alpha}+ (ma^\alpha-k)e^{-(a^\alpha+1)x^\alpha}.
$$
This is a polynomial of exponentials. Taking into account the results in Tossavainen~\cite{Toss07}, this polynomial, hence $H_1^\prime$, has at most two real roots and its sign variation is either ``$-$'' or ``$+,-$''. Consequently, the sign variation of $H_1$ is at most ``$-,+$'', so the proof is concluded.
\end{proof}
As what regards the convex transform ordering between $X_{k:k}$ and $X_{m:m}$, the following example shows that, in general, they are not comparable.
\begin{example}
We prove that $C_{k,m}(x)$ is, in general, neither convex nor concave. For this purpose, choose $k=2m$ and $\alpha=2$. Then, we may compute explicitly to find
$$
C_{2m,m}^{\prime}(x)=\frac{2x\left(e^{x^2}-1\right)}{\left(2e^{x^2}-1\right)\sqrt{-\ln\left(2e^{-x^2}-e^{-2x^2}\right)}}.
$$
A direct verification shows that $\lim_{x\rightarrow 0}C_{2m,m}^{\prime}(x)=0$, $\lim_{x\rightarrow +\infty}C_{2m,m}^{\prime}(x)=1$, and $C_{2m,m}^{\prime}(1)\approx 1.08453$, which means that $C_{2m,m}^{\prime}(x)$ is not monotone, hence $C_{2m,m}$ is neither convex nor concave. A more general statement about the non-comparability with respect to the convex transform order is stated later in Proposition~\ref{ex:+examp}.
\end{example}

A convenient adaptation of a monotonicity result for families of functions depending on a single parameter provides another criterium for the star transform order. 

First, recall the following monotonicity characterization.
\begin{theorem}[Saunders and Moran~\cite{SM78}]
\label{thm:IFRA}
Let $F_\alpha$, where $\alpha$ is a real number, be a family of distribution functions. Then  $\frac{F_\alpha^{-1}(x)}{F_\alpha^{-1}(y)}$ decreases (resp., increases) with respect to $\alpha$, for each fixed $x\geq y$, if and only if $D(\alpha,x)=\frac{F_\alpha^{\prime}(x)}{xf_\alpha(x)}$ increases (resp., decreases) with respect to $x$, where $F_\alpha^{\prime}(x)=\frac{\partial}{\partial\alpha}F_\alpha(x)$ and $f_\alpha(x)=\frac{\partial}{\partial x}F_\alpha(x)$.
\end{theorem}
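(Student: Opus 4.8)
The plan is to pass to the quantile (inverse) functions $Q_\alpha := F_\alpha^{-1}$ and to recast the monotonicity in $\alpha$ of the ratio as a statement about the monotonicity in the spatial variable of $D$, the bridge between the two being a single implicit differentiation of the identity $F_\alpha(Q_\alpha(u))=u$. Throughout I assume the regularity implicitly required by the statement: $F_\alpha$ is differentiable in both arguments, the density $f_\alpha=\partial_x F_\alpha$ is strictly positive on the support, so that $Q_\alpha$ is well defined, strictly increasing and differentiable, and $Q_\alpha$ is differentiable in $\alpha$. To keep the two roles of the letter $x$ in the statement apart, I write $u,v\in(0,1)$ for the probability levels (the arguments of $F_\alpha^{-1}$) and reserve $x$ for the spatial argument of $F_\alpha$, $f_\alpha$ and $D$.

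First I would fix probability levels $u\ge v$ and set $g(\alpha)=Q_\alpha(u)/Q_\alpha(v)$; since the variables are nonnegative with $F(0)=0$ we have $Q_\alpha(v)>0$ for $v>0$, so taking logarithms is legitimate and $g$ decreases in $\alpha$ if and only if $\partial_\alpha\log g(\alpha)\le 0$, that is, if and only if
$$
\frac{\partial_\alpha Q_\alpha(u)}{Q_\alpha(u)}-\frac{\partial_\alpha Q_\alpha(v)}{Q_\alpha(v)}\le 0 .
$$
The key computational step is to evaluate the logarithmic $\alpha$-derivative $\partial_\alpha Q_\alpha(u)/Q_\alpha(u)$. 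Differentiating $F_\alpha(Q_\alpha(u))=u$ with respect to $\alpha$ gives $F_\alpha^{\prime}(Q_\alpha(u))+f_\alpha(Q_\alpha(u))\,\partial_\alpha Q_\alpha(u)=0$, whence $\partial_\alpha Q_\alpha(u)=-F_\alpha^{\prime}(Q_\alpha(u))/f_\alpha(Q_\alpha(u))$ and therefore
$$
\frac{\partial_\alpha Q_\alpha(u)}{Q_\alpha(u)}=-\frac{F_\alpha^{\prime}(Q_\alpha(u))}{Q_\alpha(u)\,f_\alpha(Q_\alpha(u))}=-D\bigl(\alpha,Q_\alpha(u)\bigr).
$$

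Substituting this identity into the inequality above, $g$ decreases in $\alpha$ exactly when $D(\alpha,Q_\alpha(v))\le D(\alpha,Q_\alpha(u))$. Now I would invoke the change of variables $x=Q_\alpha(u)$: because $Q_\alpha$ is a strictly increasing bijection of $(0,1)$ onto the support, the condition $u\ge v$ is equivalent to $Q_\alpha(u)\ge Q_\alpha(v)$, and as $u,v$ range over all admissible levels the pair $(Q_\alpha(v),Q_\alpha(u))$ ranges over all ordered pairs of support points. Hence ``$D(\alpha,Q_\alpha(v))\le D(\alpha,Q_\alpha(u))$ for every $u\ge v$'' is precisely the assertion that $x\mapsto D(\alpha,x)$ is increasing. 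Since the argument is an equivalence at each fixed $\alpha$, this establishes the stated ``decreasing $\Leftrightarrow$ increasing'' characterization; the parenthetical ``resp.'' version follows verbatim by reversing every inequality.

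I expect the only genuine obstacle to be bookkeeping rather than depth: justifying the differentiability of $Q_\alpha$ in $\alpha$ (an application of the implicit function theorem using $f_\alpha>0$), and making sure the two appearances of the monotonicity quantifier line up — that requiring the ratio to decrease \emph{for every} pair $x\ge y$ corresponds, after the monotone reparametrization, to $D(\alpha,\cdot)$ being monotone on the \emph{whole} support and not merely at isolated points.
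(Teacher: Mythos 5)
The paper offers no proof of this statement to compare against: Theorem~\ref{thm:IFRA} is imported verbatim from Saunders and Moran~\cite{SM78} and used as a black box (its only role is to feed Lemma~\ref{lem:IFRA}). Your argument is correct, and it is in substance the classical proof from the cited source: implicit differentiation of $F_\alpha(Q_\alpha(u))=u$ gives $\partial_\alpha Q_\alpha(u)=-F_\alpha^{\prime}(Q_\alpha(u))/f_\alpha(Q_\alpha(u))$, hence $\partial_\alpha\log\bigl(Q_\alpha(u)/Q_\alpha(v)\bigr)=D\bigl(\alpha,Q_\alpha(v)\bigr)-D\bigl(\alpha,Q_\alpha(u)\bigr)$, and the monotone substitution $x=Q_\alpha(u)$ turns the sign condition, quantified over all $u\geq v$ and all $\alpha$, into monotonicity of $D(\alpha,\cdot)$; the signs and quantifier bookkeeping in your write-up all check out. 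The regularity caveats you flag (differentiability of $Q_\alpha$ in $\alpha$ via the implicit function theorem with $f_\alpha>0$, and $Q_\alpha(v)>0$ so the logarithm is legitimate) are exactly the hypotheses the statement tacitly assumes; the only point worth adding is that the support may depend on $\alpha$, so ``$D(\alpha,x)$ increasing in $x$'' should be read on the support of $F_\alpha$ for each fixed $\alpha$, which is precisely what your change of variables delivers.
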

\begin{lemma}
\label{lem:IFRA}
Let $F_\alpha$, where $\alpha$ is a real number, be a family of distribution functions. Then $F_\alpha$ is increasing (resp., decreasing) in the star transform order with respect to $\alpha$, (that is, if $\alpha_1\leq \alpha_2$ then $F_{\alpha_1}\leq_{\IFRA}F_{\alpha_2}$) if and only if, with the notations of Theorem~\ref{thm:IFRA},  $D(\alpha,x)=\frac{F_\alpha^{\prime}(x)}{xf_\alpha(x)}$ is decreasing (resp., increasing) with respect to $x$.
\end{lemma}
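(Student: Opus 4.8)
The plan is to read this lemma as a direct translation of Theorem~\ref{thm:IFRA} through the definition of the star transform order, so that no new estimate is needed. I would start by unwinding the definition: $F_{\alpha_1}\leq_{\IFRA}F_{\alpha_2}$ means that $\frac{F_{\alpha_2}^{-1}(F_{\alpha_1}(x))}{x}$ is increasing in $x$ (using the quantile form $F_Y^{-1}(F_X(x))=\overline{F}_Y^{-1}(\overline{F}_X(x))$ noted in the remark after Definition~\ref{DEF S-IFR}). The aim is to rephrase this monotonicity as a statement about the ratio $\frac{F_\alpha^{-1}(x)}{F_\alpha^{-1}(y)}$ featuring in Theorem~\ref{thm:IFRA}, after which the conclusion about $D(\alpha,x)$ is immediate.

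The first move is the change of variables $u=F_{\alpha_1}(x)$, equivalently $x=F_{\alpha_1}^{-1}(u)$. Since $F_{\alpha_1}$ is increasing, monotonicity in $x$ is the same as monotonicity in $u$, and the quotient becomes $\frac{F_{\alpha_2}^{-1}(u)}{F_{\alpha_1}^{-1}(u)}$. Thus $F_{\alpha_1}\leq_{\IFRA}F_{\alpha_2}$ is equivalent to $\frac{F_{\alpha_2}^{-1}(u)}{F_{\alpha_1}^{-1}(u)}$ being increasing in $u\in(0,1)$. The second move converts this into the $\alpha$-monotonicity required by Theorem~\ref{thm:IFRA}: for fixed probabilities $x\geq y$, increasingness of the quotient gives $\frac{F_{\alpha_2}^{-1}(x)}{F_{\alpha_1}^{-1}(x)}\geq\frac{F_{\alpha_2}^{-1}(y)}{F_{\alpha_1}^{-1}(y)}$, and since all quantiles are positive on $(0,1)$ (because $F(0)=0$), cross-multiplying rearranges this to $\frac{F_{\alpha_2}^{-1}(x)}{F_{\alpha_2}^{-1}(y)}\geq\frac{F_{\alpha_1}^{-1}(x)}{F_{\alpha_1}^{-1}(y)}$. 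As this holds for every pair $\alpha_1\leq\alpha_2$, the star monotonicity of the family is exactly the statement that $\frac{F_\alpha^{-1}(x)}{F_\alpha^{-1}(y)}$ increases in $\alpha$ for all fixed $x\geq y$. Theorem~\ref{thm:IFRA} then identifies this with $D(\alpha,x)$ being decreasing in $x$, which is the claim; the ``resp.'' (decreasing) case follows by reversing every inequality throughout.

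The argument is a reformulation rather than a genuine analytic difficulty, so I do not expect a substantive obstacle. The points that need care are the positivity of the quantile functions, which is what licenses the cross-multiplication without flipping inequalities, the regularity (strict monotonicity of $F_{\alpha_1}$) ensuring that $u=F_{\alpha_1}(x)$ is a monotone bijection onto $(0,1)$ so that the change of variables preserves the direction of monotonicity, and keeping the two directions of the ``resp.'' statement aligned with the corresponding directions in Theorem~\ref{thm:IFRA}.
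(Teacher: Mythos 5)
Your proposal is correct and takes essentially the same route as the paper's own proof: the substitution $u=F_{\alpha_1}(x)$ converting star-shapedness of $F_{\alpha_2}^{-1}\circ F_{\alpha_1}$ into increasingness of $\frac{F_{\alpha_2}^{-1}(u)}{F_{\alpha_1}^{-1}(u)}$, the cross-multiplication yielding monotonicity of $\frac{F_\alpha^{-1}(x)}{F_\alpha^{-1}(y)}$ in $\alpha$ for fixed $x\geq y$, and the final appeal to Theorem~\ref{thm:IFRA} are exactly the steps the paper carries out. You merely make explicit two points the paper leaves implicit (the positivity of the quantiles licensing the cross-multiplication, and the bijectivity of the change of variables), which is a harmless refinement rather than a different argument.
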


\begin{proof}
Consider two real numbers $\alpha_1\leq\alpha_2$. If $F_{\alpha_1}\leq_{\IFRA}F_{\alpha_2}$, then, due to the increasingness of $F_{\alpha_1}^{-1}$, it follows that, for $y\leq x$, we have
$$
\frac{F^{-1}_{\alpha_2}}{F^{-1}_{\alpha_1}}(y)\leq \frac{F^{-1}_{\alpha_2}}{F^{-1}_{\alpha_1}}(x)
\quad\Leftrightarrow\quad \frac{F^{-1}_{\alpha_2}(x)}{F^{-1}_{\alpha_2}(y)}\geq \frac{F^{-1}_{\alpha_1}(x)}{F^{-1}_{\alpha_1}(y)},
$$
which is equivalent to $\frac{F^{-1}_\alpha(x)}{F^{-1}_\alpha(y)}$ being increasing with respect to $\alpha$, hence, using Theorem~\ref{thm:IFRA}, still equivalent to $\frac{F_\alpha^{\prime}(x)}{xf_\alpha(x)}$ being decreasing with respect to $x$.
\end{proof}
The previous lemma enables a few more comparisons with respect to the star transform order. We present next an example showing that even with respect to the star transform order, the comparison between parallel systems may fail to hold.
\begin{example}
Let $X_1,\ldots,X_n$ 
be independent and identically distributed random variables with $u$-quadratic distribution with density $f(x)=\frac{3}{16}(x-2)^2\dI_{[0,4]}(x)$,
and distribution function
$F(x)=\frac{1}{16}\left((x-2)^3+8)\right)$, for $0\leq x\leq4$.
Given distinct integers $k$ and $m$ to decide about the star transform order between $X_{k:k}$ and $X_{m:m}$, according to Lemma~\ref{lem:IFRA}, it is enough to look at the monotonicity with respect to $x$ of $D(\alpha,x)=\frac{F_\alpha^{\prime}(x)}{xf_\alpha(x)}$, where $F_\alpha(x)=F^\alpha(x)$ and $F_\alpha^\prime(x)=\frac{\partial}{\partial\alpha}F^\alpha(x)$.
Then $D(\alpha,x)=\frac{F(x)\log(F(x))}{xf(x)}$ and it is easy to check that, for each $\alpha>0$ fixed, $\lim_{x\rightarrow0}D(\alpha,x)=-\infty$, $\lim_{x\rightarrow 2}D(\alpha,x)=-\infty$, and $D(\alpha,4)=0$, which means that $D(\alpha,x)$ is, for each $\alpha>0$ fixed, not monotone with respect to $x$, hence $X_{k:k}$ and $X_{m:m}$ are not comparable with respect to the star transform order. Of course, taking into account Remark~\ref{rem:imply}, these variables are also not comparable with respect to the convex transform order.
\end{example}
We complete this section with some positive results about the comparison of parallel systems with respect to the star transform order applying Theorem~\ref{thm:IFRA}. Recall that a nonnegative random variable is said to have the generalized exponential distribution, denoted by $GE(\alpha,\lambda)$ if its distribution function is $F_{X}(x) = (1-e^{-\lambda x})^\alpha$, for $x\geq 0$, where $\alpha,\lambda>0$ are the shape and scale parameters, respectively.
\begin{proposition}
Let $X_1,\ldots,X_n$ 
be independent random variables with $GE(\alpha,\lambda)$ distributions, and $Y_1,\ldots,Y_n$ 
be independent random variables with $GE(\alpha^\ast,\lambda^\ast)$ distributions. 
\begin{enumerate}
\item
Given $m>k$, it holds that $X_{m:m}\leq_{\IFRA}X_{k:k}$.
\item
If $\alpha>\alpha^\ast$ and $\lambda=\lambda^\ast$, then for every $n\geq 1$, $X_{n:n}\leq_{\IFRA}Y_{n:n}$.
\item
If $\alpha=\alpha^\ast$, then given integers $m$ and $k$, $X_{k:k}$ and $X_{m:m}$ are equivalent with respect to the star transform order.
\end{enumerate}
\end{proposition}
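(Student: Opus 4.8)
The plan is to use the fact that a homogeneous parallel system built from $GE(\alpha,\lambda)$ components is again generalized exponential: since $F_{X_{n:n}}(x)=F_X^n(x)=(1-e^{-\lambda x})^{n\alpha}$, we have $X_{n:n}\sim GE(n\alpha,\lambda)$ and, likewise, $Y_{n:n}\sim GE(n\alpha^\ast,\lambda^\ast)$. As the star transform order is invariant under a common scaling (its equivalence classes being exactly the scale classes recalled after Theorem~\ref{thm:newcrit}), in parts~(1) and~(2), where the scale is shared, I would normalise $\lambda=1$ and view $F_\beta(x)=(1-e^{-x})^\beta$ as a one-parameter family indexed by the shape $\beta>0$. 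With this reduction both parts become the single assertion that this family is \emph{decreasing} in the star transform order as $\beta$ increases: part~(1) corresponds to $\beta=k\alpha<m\alpha$ and part~(2) to $\beta=n\alpha^\ast<n\alpha$.

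To treat the family I would apply Lemma~\ref{lem:IFRA}, which requires computing $D(\beta,x)=\frac{F_\beta'(x)}{x f_\beta(x)}$ with $F_\beta'=\partial_\beta F_\beta$. Using $F_\beta'(x)=F_\beta(x)\log(1-e^{-x})$ and $f_\beta(x)=\beta e^{-x}(1-e^{-x})^{\beta-1}$, cancellation gives
\[
D(\beta,x)=\frac{(1-e^{-x})\log(1-e^{-x})}{\beta\,x\,e^{-x}}=\frac{\phi(x)}{\beta\,x},\qquad \phi(x)=(e^{x}-1)\log(1-e^{-x}).
\]
Since the factor $1/\beta$ is positive, the monotonicity of $D$ in $x$ coincides with that of $\phi(x)/x$, and by Lemma~\ref{lem:IFRA} the family decreases in the star order exactly when $\phi(x)/x$ is increasing.

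The crux, and the step I expect to demand the most care, is thus to show that $\phi(x)/x$ is increasing on $(0,\infty)$. I would prove this by establishing that $\phi$ is convex with $\phi(0^+)=0$, since a convex function vanishing at the origin has nondecreasing chord slope $\phi(x)/x$. The boundary value follows from $(e^x-1)\log(1-e^{-x})\sim x\log x\to 0$ as $x\to 0$. For convexity I would use $\phi'(x)=1+e^{x}\log(1-e^{-x})$ and then reduce $\phi''(x)\ge 0$, through the substitution $s=1-e^{-x}\in(0,1)$, to the elementary inequality $1-s+s\log s\ge 0$; the latter holds because its left-hand side has derivative $\log s<0$ on $(0,1)$ and vanishes at $s=1$. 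Once $\phi(x)/x$ is increasing, Lemma~\ref{lem:IFRA} delivers $X_{m:m}\leq_{\IFRA}X_{k:k}$ for $m>k$ and $X_{n:n}\leq_{\IFRA}Y_{n:n}$ when $\alpha>\alpha^\ast$, $\lambda=\lambda^\ast$, which are parts~(1) and~(2).

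Part~(3) needs no monotonicity argument. When $\alpha=\alpha^\ast$ the two systems being compared are generalized exponential with a common shape parameter and differ only through the scale, so they lie in one and the same scale class; by the scale invariance of the star transform order recalled above they are therefore equivalent, which completes the proof.
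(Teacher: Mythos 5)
Your proof is correct and rests on the same key tool as the paper's own proof, namely the Saunders--Moran criterion in the form of Lemma~\ref{lem:IFRA}, but your execution differs in ways worth recording. The paper treats parts (1) and (2) as two separate parametrisations: for (1) it computes $D(n,x)$ and its $x$-derivative and then dismisses the positivity of $\frac{\partial D}{\partial x}$ with ``some elementary calculus arguments''; for (2) it merely remarks that $F_\alpha(x)$ decreases pointwise in $\alpha$, which on its own is not a star-order argument. Your scale normalisation, legitimate because the star order acts on the scale-equivalence classes recalled after Theorem~\ref{thm:newcrit}, merges (1) and (2) into the single assertion that $F_\beta(x)=(1-e^{-x})^\beta$ decreases in the star order as $\beta$ grows, and so repairs the shorthand in the paper's part (2). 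More substantially, you supply the calculus the paper omits: writing $D(\beta,x)=\phi(x)/(\beta x)$ with $\phi(x)=(e^x-1)\log(1-e^{-x})$, the quantity $x\phi'(x)-\phi(x)=x+\bigl((x-1)e^x+1\bigr)\log(1-e^{-x})$ is exactly the numerator of the paper's $\frac{\partial D}{\partial x}(n,x)$ at $\lambda=1$, and your route to its nonnegativity --- convexity of $\phi$ via the substitution $s=1-e^{-x}$ and the elementary inequality $1-s+s\log s\geq 0$, combined with $\phi(0^+)=0$ and the chord-slope property of convex functions vanishing at the origin --- is complete and correct (the chord-slope argument is indifferent to the sign of $\phi$, so the negativity of $\phi$ causes no trouble). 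For part (3) the paper takes $\lambda$ as the parameter with $n$ and $\alpha$ fixed and computes $D(\lambda,x)=\frac{1}{\lambda}$, constant in $x$; you reach the same conclusion by directly invoking scale invariance, a softer but equally valid route.

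One caveat, which concerns the statement rather than your argument: read literally, part (3) compares $X_{k:k}\sim GE(k\alpha,\lambda)$ with $X_{m:m}\sim GE(m\alpha,\lambda)$, whose shape parameters differ when $k\neq m$; these are \emph{not} star-equivalent, since equivalence would force $(1-e^{-\lambda cx})^{m\alpha}=(1-e^{-\lambda x})^{k\alpha}$ for some $c>0$ and all $x$, which is impossible for $k\neq m$ (and is also excluded by the strict monotonicity of $D$ that your convexity argument yields). Your phrase ``common shape parameter, differing only through the scale'' therefore presupposes equal component counts, i.e., the comparison of $X_{n:n}$ with $Y_{n:n}$ when $\alpha=\alpha^\ast$ --- which is precisely what the paper's own proof establishes by holding $n$ and $\alpha$ constant and varying only $\lambda$. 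In other words, you silently corrected the same typo that the paper's proof corrects, so this is a defect of the statement, not a gap in your proof.
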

\begin{proof}
The proof reduces to identifying the function $D(\cdot,x)$ corresponding to the adequate parameter for the usage of Theorem~\ref{thm:IFRA}, and verifying that it satisfies the appropriate monotonicity.
\begin{enumerate}
\item
For this case we take the parameter to be $n$, considering $F_n(x)=F_{X_{n:n}}(x)=(1-e^{-\lambda x})^{\alpha n}$ ($\alpha$ and $\lambda$ are considered constant). Then
$$
D(n,x)=\frac{F_n^\prime(x)}{xf_n(x)}=\frac{1}{n\lambda}\left(\frac{e^{\lambda x}-1}{x}\right)\log (1-e^{-\lambda x})
$$
and
$$
\frac{\partial D}{\partial x}(n,x) =\frac{((\lambda x-1)e^{\lambda x}+1)\log(1-e^{-\lambda x})+\lambda x}{x^2\lambda n}. 
$$
Some elementary calculus arguments show that $\frac{\partial D}{\partial x}(n,x)>0$, for every $x>0$, thus, taking into account Theorem~\ref{thm:IFRA}, $F_n(x)$ decreases in star transform order with respect to $n$, that is, for $m>k$, $X_{m:m}\leq_{\IFRA}X_{k:k}$.
		
\item
We now consider $\alpha$ as the parameter: $F_{\alpha}(x) = F_{X_{n:n}}(x)$ ($n$ and $\lambda$ constant). It is easily verified that $F_{\alpha}(x)$ decreases with respect to $\alpha$, so the conclusion follows.
		
\item
Considering $n$ and $\alpha$ constant, it follows that $D(\lambda,x) = \frac{1}{\lambda}$, meaning the $X_{m:m}$ and $X_{k:k}$ are equivalent with respect to the star transform order.
\end{enumerate}
\end{proof}

%

\section{Non-comparability of parallel systems}
\label{sec:nonconvex}
The arguments used in Arab et al.~\cite{AHO19} to prove the non-comparability of systems suggested we should look at the function $V$ introduced in Theorem~\ref{convexity-equivalence} as function of three variables $(x,a,b)$, instead of the usual approach that fixed the choices for $a$ and $b$, and then studied the sign variation with respect to $x$. The proof of Theorem~11 in Arab et al.~\cite{AHO19} shows that exploring the general behaviour based on all the three variables provides the information needed for concluding the non-comparability of the parallel systems treated there. As a result of a few numerical simulations, this approach suggested that whenever $\overline{F}_Y^{-1}(\overline{F}_X(x))$ is asymptotically linear, then non-comparability should hold. In the sequel we prove this assertion, provide some applications to the comparability between parallel systems, and, in particular, respond negatively to the conjecture raised by Kochar and Xu~\cite{KX09} with a weaker assumption on the hazard rates for the distributions involved.

The next result provides simple characterization of non-convexity or non-concavity.
\begin{theorem}
\label{CONV_Characterization}
Let $h$ be an increasing function defined on the positive real line such that $h(0)=0$, and assume that there exist real numbers $b$ and $c$ such that $\lim_{x\rightarrow+\infty}\bigl(h(x)-(bx+c)\bigr)=0$.
\begin{enumerate}
\item
If $c> 0$, then $h$ is not convex.
\item
If $c<0$, then $h$ is not concave.
\item
If $c=0$ and $h(x)\neq bx$, then $h$ is neither convex nor concave.
\end{enumerate}
\end{theorem}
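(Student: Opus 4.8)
The plan is to reduce all three statements to a single observation about convex (respectively concave) functions that possess a finite limit at infinity after subtracting the linear part. I would set $g(x)=h(x)-bx$ on $[0,+\infty)$. By hypothesis $g(x)\to c$ as $x\to+\infty$, and since $h(0)=0$ we have $g(0)=0$. If $h$ is convex then $g$ is convex, because subtracting an affine function preserves convexity, and symmetrically $h$ concave makes $g$ concave. Thus everything will be read off from the behaviour of $g$ together with the two anchor facts $g(0)=0$ and $g(+\infty)=c$.

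First I would establish the key lemma: a convex function $g$ on $[0,+\infty)$ that is bounded above is non-increasing. This follows from the monotonicity of chord slopes, avoiding any appeal to differentiability (which is not assumed): if $g(x_1)<g(x_2)$ for some $x_1<x_2$, then for every $x>x_2$ convexity gives
$$
\frac{g(x)-g(x_2)}{x-x_2}\geq\frac{g(x_2)-g(x_1)}{x_2-x_1}>0,
$$
so $g(x)$ grows at least linearly and cannot stay bounded, a contradiction. Since here $g$ converges, it is in particular bounded above, hence non-increasing; consequently $g(x)\geq\lim_{t\to+\infty}g(t)=c$ for all $x\geq0$. The dual statement, that a concave function bounded below is non-decreasing and therefore satisfies $g(x)\leq c$, follows at once by applying this to $-g$.

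With the lemma in hand the three cases are immediate. Suppose $h$ is convex. Then $g$ is non-increasing with $g(0)=0$, so $g(x)\leq0$, while the lemma gives $g(x)\geq c$; thus $c\leq g(x)\leq0$ and in particular $c\leq0$. This rules out convexity whenever $c>0$, proving part~(1). Dually, if $h$ is concave then $g$ is non-decreasing with $g(0)=0$, so $0\leq g(x)\leq c$, forcing $c\geq0$ and ruling out concavity whenever $c<0$, which is part~(2). For part~(3), taking $c=0$ in the convex case yields $0\leq g(x)\leq0$, i.e.\ $g\equiv0$ and $h(x)=bx$; likewise the concave case forces $g\equiv0$. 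Hence if $h(x)\neq bx$ the function can be neither convex nor concave.

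The argument is essentially finished once the lemma is proved, so I expect the only real point of care to be the non-differentiability of $h$: I would therefore phrase the whole reduction through chord-slope inequalities rather than through $h'$ or $g'$. This is the main (and fairly mild) obstacle, and the chord-slope formulation dispatches it cleanly while also making transparent why the sign of $c$ relative to the value $h(0)=0$ is exactly what decides convexity versus concavity.
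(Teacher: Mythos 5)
Your proof is correct, but it takes a genuinely different route from the paper's. The paper works with derivatives: from $\lim_{x\rightarrow+\infty}\bigl(h(x)-(bx+c)\bigr)=0$ it asserts $\lim_{x\rightarrow+\infty}h^{\prime}(x)=b$, so that under convexity $h^{\prime}$ increases up to $b$, making $g(x)=h(x)-(bx+c)$ decreasing, which is incompatible with $g(0)=-c<0$ and $g(x)\rightarrow 0$; the concave and $c=0$ cases are dispatched symmetrically, the last one by noting $g$ (hence $h^{\prime}$) cannot be monotone. That argument tacitly assumes $h$ is differentiable, and its opening step is delicate: the asymptote condition alone does not imply $\lim h^{\prime}(x)=b$ — this inference is only valid once convexity or concavity (monotonicity of $h^{\prime}$) has been assumed, a point the paper leaves implicit. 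Your chord-slope lemma (a convex function on a half-line that is bounded above is non-increasing, proved via the three-point slope inequality) removes both concerns: it needs no differentiability, which the theorem's hypotheses indeed never grant, and it reduces all three cases to the single sandwich $c\leq g(x)\leq 0$ (respectively $0\leq g(x)\leq c$) for $g(x)=h(x)-bx$, where the monotone-limit step ($g$ non-increasing with limit $c$ forces $g\geq c$) is made explicit rather than absorbed into a contradiction. In the case $c=0$ your argument even delivers slightly more than the paper's, namely that convexity or concavity forces $h(x)=bx$ identically, which is exactly why the hypothesis $h(x)\neq bx$ in part (3) is the right one. The paper's version is shorter if one is content with smooth $h$; yours is more elementary and strictly more general.
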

\begin{proof}
As $\lim_{x\rightarrow+\infty}\bigl(h(x)-(bx+c)\bigr)=0$, it follows that $\lim_{x\rightarrow+\infty}h^{\prime}(x)=b$. Define now $g(x)=h(x)-(bx+c)$. Assume $c>0$ and that $h$ is convex. Then, it follows that $g$ is decreasing, which is impossible as $g(0)=-c<0$ and $\lim_{x\rightarrow+\infty}g(x)=0$. Likewise, if we assume that $c\leq0$ and $h$ is concave, then it follows that $h$ is increasing, which is not compatible with the fact that $g(0)=-c>0$ and $\lim_{x\rightarrow+\infty}g(x)=0$. Finally, if $c=0$, then $h$ cannot be monotonic as $g(0)=0$ and $\lim_{x\rightarrow+\infty}g(x)=0$. Consequently, $h^{\prime}$ is not monotonic.
\end{proof}
We may now complete the description of ordering relationship between the random variables considered in Proposition~\ref{example_Max_WEIBULL}.
\begin{proposition}
\label{ex:+examp}
Let $X_1,\ldots,X_n$ 
be independent and identically distributed Weibull random variables with distribution functions $F(x)=1-e^{-x^\alpha}$, for $x\geq 0$, with $\alpha>1$.
Given distinct integers $m,k\leq n$, the random variables $X_{k:k}$ and $X_{m:m}$ are not comparable with respect to the convex transform order.
\end{proposition}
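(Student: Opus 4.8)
The plan is to reduce the non-comparability of $X_{k:k}$ and $X_{m:m}$ to a single convexity question and then apply Theorem~\ref{CONV_Characterization}. Keeping the notation of Proposition~\ref{example_Max_WEIBULL}, write $C_{k,m}(x)=F_m^{-1}(F_k(x))$, which is increasing and satisfies $C_{k,m}(0)=0$. By Definition~\ref{DEF S-IFR}, $X_{k:k}\leq_{\IFR}X_{m:m}$ is equivalent to convexity of $C_{k,m}$, whereas $X_{m:m}\leq_{\IFR}X_{k:k}$ is equivalent to convexity of the inverse map $C_{m,k}=C_{k,m}^{-1}$. Since an increasing function is convex if and only if its inverse is concave, the latter ordering amounts to concavity of $C_{k,m}$. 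Hence the two systems are non-comparable precisely when $C_{k,m}$ is neither convex nor concave, which is exactly the conclusion furnished by part~(3) of Theorem~\ref{CONV_Characterization}, provided we can exhibit $C_{k,m}$ as asymptotically linear with zero intercept.

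The heart of the proof is therefore an expansion of $C_{k,m}$ at infinity. I would substitute $t=e^{-x^\alpha}\to 0^+$ and use $(1-t)^{k/m}=1-\frac{k}{m}t+O(t^2)$, so that $1-(1-t)^{k/m}=\frac{k}{m}t\,(1+O(t))$; taking logarithms yields $-\ln\bigl(1-(1-t)^{k/m}\bigr)=x^\alpha-\ln\frac{k}{m}+O(e^{-x^\alpha})$. Raising to the power $1/\alpha$ and factoring out $x$ then gives
$$
C_{k,m}(x)=\Bigl(x^\alpha-\ln\tfrac{k}{m}+O(e^{-x^\alpha})\Bigr)^{1/\alpha}=x-\frac{\ln(k/m)}{\alpha}\,x^{1-\alpha}+o\bigl(x^{1-\alpha}\bigr).
$$
Here the hypothesis $\alpha>1$ is decisive: it forces $x^{1-\alpha}\to 0$, so that $C_{k,m}(x)-x\to 0$ as $x\to+\infty$, i.e.\ the asymptotic line $bx+c$ has $b=1$ and $c=0$.

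With $b=1$ and $c=0$ in hand, I would invoke part~(3) of Theorem~\ref{CONV_Characterization}, noting that $C_{k,m}(x)\not\equiv x$ because $m\neq k$ (equality would force $F^m=F^k$). This shows $C_{k,m}$ is neither convex nor concave, and by the reduction above $X_{k:k}$ and $X_{m:m}$ are non-comparable in the convex transform order.

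I expect the asymptotic expansion to be the main obstacle, chiefly the bookkeeping of the remainders: one must confirm that both the $O(t^2)$ term in the binomial expansion and the $O(e^{-x^\alpha})$ term inside the logarithm are negligible against $x^{1-\alpha}$, so that the intercept is exactly $c=0$ and not a spurious nonzero constant. It is instructive to compare with the exponential case $\alpha=1$, where the identical computation produces $c=-\ln(k/m)\neq 0$; Theorem~\ref{CONV_Characterization} then gives only a one-sided obstruction, in agreement with the known comparability of homogeneous exponential parallel systems.
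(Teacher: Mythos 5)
Your proof is correct and takes essentially the same route as the paper's: establish $\lim_{x\rightarrow+\infty}\bigl(C_{k,m}(x)-x\bigr)=0$ and invoke part~(3) of Theorem~\ref{CONV_Characterization}, noting $C_{k,m}$ is not linear. The only difference is one of detail --- the paper asserts the limit without computation, whereas you supply the asymptotic expansion (correctly, with $\alpha>1$ ensuring the $x^{1-\alpha}$ correction vanishes) and spell out the reduction of two-sided non-comparability to $C_{k,m}$ being neither convex nor concave, which the paper leaves implicit.
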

\begin{proof}
With the notation introduced in the course of the proof of Proposition~\ref{example_Max_WEIBULL}, it follows that $\lim_{x\rightarrow+\infty}\bigl(C_{k,m}(x)-x\bigr)=0$. Therefore, according to Theorem~\ref{CONV_Characterization}, as $C_{k,m}$ is obviously not linear, it is neither convex nor concave, so $X_{k:k}$ and $X_{m:m}$ are not comparable with respect to the convex transform order.
\end{proof}
\begin{remark}
\label{rmk:exp-weibull}
The limit $\lim_{x\rightarrow+\infty}\bigl(C_{k,m}(x)-x\bigr)=0$ mentioned above holds even if $k$ and $m$ are not integers. Consequently, the non comparability follows between random variables with exponentiated Weibull distributions (see Mudholkar and Srivastava~\cite{MS93} for properties and applications of this family of distributions).
\end{remark}

\begin{example}
We now consider an extension of Proposition~\ref{example_Max_WEIBULL}. Let $X_1,\ldots,X_n$ 
be independent and identically distributed Weibull random variables with shape parameter $\alpha>0$ and scale parameter $\lambda>0$.
Given integers $m,k\leq n$, consider the random variables $X_{m:m}$ and $X_{1:k}$. 
Denoting by $F_m$ the distribution function of $X_{m:m}$, as before, and by $G_k$ the distribution function of $X_{1:k}$ we may compute explicitly
$$ F_m^{-1}(G_k(x))=\frac{1}{\lambda}\left(-\ln\left(1-\left(1-e^{-\left(\lambda k^{1/\alpha}x\right)^\alpha}\right)^{\frac{1}{m}}\right)\right)^{\frac{1}{\alpha}},
$$
and verify that
$\lim_{x\rightarrow+\infty}\bigl(F_m^{-1}(G_k(x))-\lambda k^{1/\alpha}x\bigr)=0$. Therefore, according to Theorem~\ref{CONV_Characterization}, $X_{m:m}$ and $X_{1:k}$ are not comparable with respect to the convex transform order.
\end{example}

Using Theorem~\ref{CONV_Characterization} to conclude about the non-comparability of random variables $X$ and $Y$ requires being able to characterize the asymptotic behaviour of $\overline{F}_Y^{-1}(\overline{F}_X(x))$. These functions are often not available, so it is preferable to have a criterium based on the comparison of the density functions.
\begin{theorem}
\label{thm:comp1}
Let $F$ and $G$ be distribution functions of class $\mathcal{F}$ with densities $f$ and $g$, respectively. Assume there exists $c>0$ such that
\begin{equation}\label{NON-CVX}
\forall\varepsilon>0, \exists A>0,\, x \geq A \;\Rightarrow\;
cg(cx+\varepsilon)\leq f(x)\leq cg(cx-\varepsilon).
\end{equation}
Then $G^{-1}(F(x))$ is neither convex nor concave or else $G^{-1}(F(x))=cx$.
\end{theorem}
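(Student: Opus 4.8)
The plan is to deduce the statement from Theorem~\ref{CONV_Characterization} applied to the function $h(x)=G^{-1}(F(x))$. Since $F,G\in\mathcal{F}$ we have $h(0)=G^{-1}(F(0))=G^{-1}(0)=0$, and $h$ is increasing, being a composition of the increasing functions $F$ and $G^{-1}$. Thus it suffices to show that $h$ is asymptotically affine with slope equal to the constant $c$ and vanishing intercept, that is, $\lim_{x\to+\infty}\bigl(h(x)-cx\bigr)=0$. Once this is established, case~(3) of Theorem~\ref{CONV_Characterization} (in the notation of that theorem, with slope $b=c$ and intercept $0$) forces $h$ to be neither convex nor concave unless $h(x)=cx$ identically, which is exactly the dichotomy asserted.

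To produce that limit I would first convert the density hypothesis \eqref{NON-CVX} into a squeezing of the distribution functions. Fix $\varepsilon>0$ and take $A$ as in \eqref{NON-CVX}. For every $x\ge A$ the pointwise bound $cg(ct+\varepsilon)\le f(t)\le cg(ct-\varepsilon)$ holds for all $t\ge x$, so integrating over $[x,+\infty)$ and using $\int_x^{+\infty}cg(ct\pm\varepsilon)\,dt=\bG(cx\pm\varepsilon)$ (change of variables $u=ct\pm\varepsilon$, together with $\bG(+\infty)=0$) yields
\[
\bG(cx+\varepsilon)\le\bF(x)\le\bG(cx-\varepsilon),
\qquad\text{equivalently}\qquad
G(cx-\varepsilon)\le F(x)\le G(cx+\varepsilon).
\]
All the integrals converge because the integrands are nonnegative and dominated by tails of probability distributions, and the one-sided tail inequalities are legitimate since the pointwise bound holds throughout $[x,+\infty)$.

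Next I would apply $G^{-1}$ to the chain $G(cx-\varepsilon)\le F(x)\le G(cx+\varepsilon)$. For $x$ large enough that $cx-\varepsilon>0$, monotonicity of $G^{-1}$ together with $G^{-1}(G(cx\pm\varepsilon))=cx\pm\varepsilon$ gives $cx-\varepsilon\le h(x)\le cx+\varepsilon$, that is $|h(x)-cx|\le\varepsilon$ for all sufficiently large $x$. Since $\varepsilon>0$ is arbitrary, this proves $\lim_{x\to+\infty}\bigl(h(x)-cx\bigr)=0$, and an appeal to Theorem~\ref{CONV_Characterization} completes the argument.

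The main obstacle is the integration step and the subsequent inversion rather than any deep idea. One must be sure the one-sided density bounds can be integrated to one-sided tail bounds (secured by nonnegativity and convergence of the tails), and that $G$ is a genuine continuous increasing bijection onto $[0,1)$, so that $G^{-1}(G(cx\pm\varepsilon))=cx\pm\varepsilon$ is valid and $G^{-1}(F(x))$ is well defined; the existence of the density $g$ with $G\in\mathcal{F}$ provides this. The only remaining point to watch is restricting to $x$ with $cx-\varepsilon>0$, which is harmless since we are computing a limit as $x\to+\infty$.
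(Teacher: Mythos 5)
Your proposal is correct and follows essentially the same route as the paper: both establish the squeeze $G(cx-\varepsilon)\leq F(x)\leq G(cx+\varepsilon)$ for large $x$, deduce $\lim_{x\rightarrow+\infty}\bigl(G^{-1}(F(x))-cx\bigr)=0$, and conclude via Theorem~\ref{CONV_Characterization}. The only cosmetic difference is that you obtain the squeeze by integrating the density bounds over the tail $[x,+\infty)$, whereas the paper reaches the identical inequalities by noting that $H_1(x)=F(x)-G(cx+\varepsilon)$ and $H_2(x)=F(x)-G(cx-\varepsilon)$ are monotone beyond $A$ with limit $0$ at infinity --- two phrasings of the same argument.
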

\begin{proof}
Assume (\ref{NON-CVX}) holds, choose an arbitrary $\varepsilon>0$, and define
$$
H_1(x)=F(x)-G(cx+\varepsilon),\quad\mbox{and}\quad H_2(x)=F(x)-G(cx-\varepsilon).
$$
As $\lim_{x\rightarrow+\infty}H_1(x)=0$ and $H_1^\prime(x)=f(x)-cg(cx+\varepsilon)\geq 0$, for every $x\geq A$, it follows that $F(x)\leq G(cx+\varepsilon)$. Analogously, using now $H_2$, it also follows that $G(cx-\varepsilon)\leq F(x)$. Consequently, when $x\geq A$, we have that $G(cx-\varepsilon)\leq F(x)\leq G(cx+\varepsilon)$,
which implies $\left\vert G^{-1}(F(x))-cx\right\vert\leq \varepsilon$, therefore, $\lim_{x\rightarrow +\infty}\bigl(G^{-1}(F(x))-cx\bigr)=0$. Taking now into account Theorem~\ref{CONV_Characterization}, the conclusion follows.
\end{proof}

As mentioned earlier, Arab et al.~\cite{AHO19} proved that parallel systems with two components with exponentially distributed lifetimes are not comparable with respect to the convex transform order. As suggested by the conjecture by Kochar and Xu~\cite{KX09}, the hazard rates must satisfy a specific relation. We recall here the complete result.
\begin{proposition}[Theorem~11 in Arab et al.~\cite{AHO19}]
\label{prop:non-kochar}
Let $X_1$, $X_2$, $Y_1$ and $Y_2$ be exponential random variables with hazard rates $\lambda_1$, $\lambda_2$, $\theta_1$ and $\theta_2$ satisfying, $0<\lambda_1<\lambda_2$,  $0<\theta_1<\theta_2$, $\lambda_1<\theta_1$ and $\lambda_1+\lambda_2=\theta_1+\theta_2$, and assume $X_1$ is independent from $X_2$ and $Y_1$ is independent from $Y_2$. Then the variables $X_{2:2}=\max(X_1,X_2)$ and $Y_{2:2}=\max(Y_1,Y_2)$ are not comparable with respect to the convex transform order.
\end{proposition}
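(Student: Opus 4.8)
The plan is to apply Theorem~\ref{thm:comp1} with $F$ the distribution function of $X_{2:2}$ and $G$ the distribution function of $Y_{2:2}$, for a suitable scaling constant $c>0$. Writing $f,g$ for the corresponding densities, independence of the components gives
$$
\overline{F}(x)=e^{-\lambda_1 x}+e^{-\lambda_2 x}-e^{-(\lambda_1+\lambda_2)x},\qquad f(x)=\lambda_1 e^{-\lambda_1 x}+\lambda_2 e^{-\lambda_2 x}-(\lambda_1+\lambda_2)e^{-(\lambda_1+\lambda_2)x},
$$
with analogous expressions for $\overline{G}$ and $g$ obtained by replacing $\lambda_1,\lambda_2$ with $\theta_1,\theta_2$. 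First I would read off the dominant asymptotics: since $\lambda_1<\lambda_2$, factoring out $e^{-\lambda_1 x}$ yields $f(x)=\lambda_1 e^{-\lambda_1 x}(1+o(1))$ as $x\to+\infty$, and likewise $g(y)=\theta_1 e^{-\theta_1 y}(1+o(1))$ since $\theta_1<\theta_2$. This identifies the correct scaling as $c=\lambda_1/\theta_1$, for which $\theta_1 c=\lambda_1$.

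With this choice of $c$, the next step is to verify the two-sided bound (\ref{NON-CVX}). Fixing $\varepsilon>0$, substituting $cx\pm\varepsilon$ into $g$ and factoring out the leading exponential gives
$$
c\,g(cx\pm\varepsilon)=\lambda_1 e^{-\lambda_1 x}\,e^{\mp\theta_1\varepsilon}\,(1+o(1)),
$$
while $f(x)=\lambda_1 e^{-\lambda_1 x}(1+o(1))$. Since $e^{-\theta_1\varepsilon}<1<e^{\theta_1\varepsilon}$, the multiplicative gap is bounded away from $1$, so for all sufficiently large $x$ we obtain $c\,g(cx+\varepsilon)\leq f(x)\leq c\,g(cx-\varepsilon)$; that is, there exists $A>0$ validating (\ref{NON-CVX}).

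It then remains to exclude the linear alternative permitted by Theorem~\ref{thm:comp1}, namely $G^{-1}(F(x))=cx$. Were this to hold identically, then $\overline{F}(x)=\overline{G}(cx)$ for every $x$; matching the leading exponentials already forces $c=\lambda_1/\theta_1$, and matching the two remaining exponential terms forces the proportionality $\lambda_2/\lambda_1=\theta_2/\theta_1$. Combined with the hypothesis $\lambda_1+\lambda_2=\theta_1+\theta_2$, this proportionality yields $\lambda_1=\theta_1$, contradicting the assumption $\lambda_1<\theta_1$. Hence $G^{-1}(F(x))$ is not linear, and Theorem~\ref{thm:comp1} gives that it is neither convex nor concave, so $X_{2:2}$ and $Y_{2:2}$ are not comparable with respect to the convex transform order.

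I expect the main obstacle to be the bookkeeping in the verification of (\ref{NON-CVX}): one must make the $o(1)$ estimates explicit enough to pin down the threshold $A=A(\varepsilon)$ and to confirm both inequalities simultaneously. This is precisely where the assumptions $\lambda_1<\lambda_2$ and $\theta_1<\theta_2$ are essential, since they guarantee that $e^{-\lambda_1 x}$ and $e^{-\theta_1 y}$ are the genuinely dominant terms, so that the fixed gap $e^{\pm\theta_1\varepsilon}$ cannot be overwhelmed by the subdominant contributions, which decay strictly faster.
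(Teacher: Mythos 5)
Your proof is correct, but it is worth flagging that the paper does not prove Proposition~\ref{prop:non-kochar} at all: it imports it from Arab et al.~\cite{AHO19}, where the argument is genuinely different --- a study of the sign variation of $V(x)=\overline{F}_Y(x)-\overline{F}_X(ax+b)$ viewed jointly in the three variables $(x,a,b)$, yielding explicit counterexample configurations, and intrinsically limited to two components per system. What you wrote is instead exactly the strategy the paper deploys for its own generalization, Theorem~\ref{thm:gen-kochar} (and again in Example~\ref{ex:gen-kochar} via Corollary~\ref{cor:NON-Convexity}): isolate the dominant exponential in each density, $f(x)=\lambda_1e^{-\lambda_1x}(1+o(1))$ and $g(y)=\theta_1e^{-\theta_1y}(1+o(1))$, take $c=\lambda_1/\theta_1$, verify the two-sided bound (\ref{NON-CVX}), and invoke Theorem~\ref{thm:comp1}. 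Two remarks on the comparison. First, your worry about ``bookkeeping'' in (\ref{NON-CVX}) is unfounded: since $f(x)/\bigl(cg(cx+\varepsilon)\bigr)\rightarrow e^{\theta_1\varepsilon}>1$ and $f(x)/\bigl(cg(cx-\varepsilon)\bigr)\rightarrow e^{-\theta_1\varepsilon}<1$, the existence of the threshold $A(\varepsilon)$ is immediate from the definition of a limit, with no explicit control of the $o(1)$ terms needed; the paper's proof of Theorem~\ref{thm:gen-kochar} treats this point in precisely this implicit way. Second, your exclusion of the linear alternative is actually \emph{more} careful than the paper's: in Theorem~\ref{thm:gen-kochar} the paper merely asserts that $G_m^{-1}(F_k(x))$ is ``obviously not linear'', whereas linearity $\overline{F}(x)=\overline{G}(cx)$ genuinely can occur when the hazard rates are proportional, and it is exactly the argument you give --- linear independence of distinct exponentials forces $\lambda_2/\lambda_1=\theta_2/\theta_1$, which with $\lambda_1+\lambda_2=\theta_1+\theta_2$ yields $\lambda_1=\theta_1$, contradicting $\lambda_1<\theta_1$ --- that shows where the hypotheses of the proposition enter. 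In short: your route trades the explicit counterexamples of \cite{AHO19} for an asymptotic argument that scales to arbitrarily many components, and it patches the one soft spot in the paper's version of that argument.
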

\begin{remark}
Using the fact that the convex transform order is invariant with respect to the multiplication of the random variables by constants, it is easily seen that the conclusion in Proposition~\ref{prop:non-kochar} still holds when \textit{(i)} the hazard rates of $X_1$ and $X_2$ are 1 and $\lambda\ne 1$, respectively, and the hazard rates of $Y_1$ and $Y_2$ are 1 and $\theta\ne1$, respectively, assuming that $\lambda\ne\theta$, or \textit{(ii)} the hazard rates of $X_1$ and $X_2$ are $\lambda_1$ and $\lambda_2$, respectively, and the hazard rates of $Y_1$ and $Y_2$ are $\theta_1$ and $\theta_2$, respectively, assuming that $\frac{\lambda_2}{\lambda_1}\geq\frac{\theta_2}{\theta_1}$. Hence, the negative answer to the validity of the Kochar and Xu~\cite{KX09} conjecture proved by Arab et al.~\cite{AHO19} holds for a broader choice of parameters than described in Proposition~\ref{prop:non-kochar}. However, the proof given in \cite{AHO19} only applies to parallel systems with two components in each system. 
Using Theorem~\ref{thm:comp1}, we may prove a more general non-comparability result.
\end{remark}

\begin{theorem}
\label{thm:gen-kochar}
Let $X_1,\ldots,X_n$ be independent exponential random variables with hazard rates $\lambda_1<\lambda_2\leq\lambda_3\leq\cdots\leq\lambda_n$, and $Y_1,\ldots,Y_n$ be independent exponential random variables with hazard rates $\theta_1<\theta_2\leq\theta_3\leq\cdots\leq\theta_n$.
Then, for every integers $m$ and $k$, $X_{k:k}$ and $Y_{m:m}$ are not comparable with respect to the convex transform order.
\end{theorem}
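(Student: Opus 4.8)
The plan is to apply Theorem~\ref{thm:comp1} directly, with $F=F_{X_{k:k}}$ and $G=F_{Y_{m:m}}$. Writing the two parallel systems explicitly, $F(x)=\prod_{i=1}^{k}(1-e^{-\lambda_i x})$ and $G(x)=\prod_{j=1}^{m}(1-e^{-\theta_j x})$, with densities $f(x)=\sum_{i=1}^{k}\lambda_i e^{-\lambda_i x}\prod_{\ell\neq i}(1-e^{-\lambda_\ell x})$ and the analogous expression for $g$; both $F,G\in\mathcal{F}$ since each system starts at $0$. The whole argument rests on pinning down the correct asymptotic slope $c$ and on controlling the densities at infinity well enough to verify the double inequality (\ref{NON-CVX}).

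First I would establish the tail and density asymptotics. Since $\lambda_1<\lambda_2\leq\cdots$, the slowest decaying exponential in $f$ is the one carrying $\lambda_1$, and the \emph{strict} inequality $\lambda_1<\lambda_2$ guarantees that this minimal rate has multiplicity one; hence $f(x)=\lambda_1 e^{-\lambda_1 x}\bigl(1+o(1)\bigr)$ as $x\to+\infty$, all remaining contributions being of strictly smaller exponential order. The same reasoning, using $\theta_1<\theta_2$, gives $g(x)=\theta_1 e^{-\theta_1 x}\bigl(1+o(1)\bigr)$. These strict inequalities are precisely what force the leading coefficients to be exactly $\lambda_1$ and $\theta_1$ with no extra multiplicative factor; this is what makes the asymptotic intercept vanish and lands us in the ``neither convex nor concave'' regime rather than merely ``not convex'' or ``not concave'', which on their own would not preclude comparability.

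Next I would set $c=\lambda_1/\theta_1>0$. For fixed $\varepsilon>0$ the asymptotics give $cg(cx+\varepsilon)=\lambda_1 e^{-\lambda_1 x-\theta_1\varepsilon}\bigl(1+o(1)\bigr)$ and $cg(cx-\varepsilon)=\lambda_1 e^{-\lambda_1 x+\theta_1\varepsilon}\bigl(1+o(1)\bigr)$, so that $cg(cx+\varepsilon)/f(x)\to e^{-\theta_1\varepsilon}<1$ and $f(x)/\bigl(cg(cx-\varepsilon)\bigr)\to e^{-\theta_1\varepsilon}<1$. Consequently there is $A>0$ with $cg(cx+\varepsilon)\leq f(x)\leq cg(cx-\varepsilon)$ for all $x\geq A$, which is exactly condition (\ref{NON-CVX}). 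Theorem~\ref{thm:comp1} then yields that $G^{-1}(F(x))$ is neither convex nor concave, \emph{unless} $G^{-1}(F(x))=cx$.

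The hard part is therefore to exclude the linear alternative $G^{-1}(F(x))=cx$, i.e. $F(x)=G(cx)$ for every $x$. I would do this through the behaviour at the origin: from $F(x)\sim\bigl(\prod_{i=1}^k\lambda_i\bigr)x^{k}$ and $G(y)\sim\bigl(\prod_{j=1}^m\theta_j\bigr)y^{m}$ one obtains $G^{-1}(F(x))\sim C_0\,x^{k/m}$ as $x\to 0^{+}$ for a positive constant $C_0$, which is linear only when $k=m$; hence for $k\neq m$ the transform cannot equal $cx$ and non-comparability follows immediately. When $k=m$, the equality $F(x)=G(cx)$ would force the multisets $\{\lambda_i\}$ and $\{c\theta_j\}$ to coincide, that is, the two hazard-rate vectors to be proportional, a genuine scale equivalence of the two systems; ruling out this degenerate case is the delicate point, and I expect it to be the main obstacle. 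Once it is excluded, $G^{-1}(F(x))$ is neither convex nor concave, so $X_{k:k}$ and $Y_{m:m}$ are not comparable with respect to the convex transform order, as claimed.
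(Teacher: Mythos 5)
Your proposal follows essentially the same route as the paper: the paper's proof also writes $f_k(x)=\lambda_1e^{-\lambda_1x}+o(e^{-\lambda_1x})$ and $g_m(x)=\theta_1e^{-\theta_1x}+o(e^{-\theta_1x})$, chooses $c=\lambda_1/\theta_1$, checks condition (\ref{NON-CVX}), and invokes Theorem~\ref{thm:comp1}. Your verification of (\ref{NON-CVX}) via the limits $cg(cx+\varepsilon)/f(x)\to e^{-\theta_1\varepsilon}<1$ and $f(x)/cg(cx-\varepsilon)\to e^{-\theta_1\varepsilon}<1$ is exactly the computation the paper leaves implicit, and your observation that the strict inequalities $\lambda_1<\lambda_2$, $\theta_1<\theta_2$ are what force the leading coefficients to match (multiplicity one of the minimal rate) correctly identifies why the hypotheses are stated as they are.

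Where you go beyond the paper is in scrutinizing the linear alternative, which the paper dismisses in one clause (``As $G_m^{-1}(F_k(x))$ is obviously not linear\ldots''). Your treatment of $k\neq m$ via the behaviour at the origin, $G^{-1}(F(x))\sim C_0x^{k/m}$, is complete and is a more rigorous substitute for ``obviously''. But the ``main obstacle'' you flag for $k=m$ cannot be removed, and you should not expect to exclude it: if $k=m$ and $\lambda_i=c\theta_i$ for all $i$, then $F_k(x)=G_m(cx)$ identically, $G_m^{-1}(F_k(x))=cx$ \emph{is} linear, the two systems are scale-equivalent, and they are trivially comparable (indeed $X_{k:k}\leq_{\IFR}Y_{m:m}$ and conversely). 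So read literally, the statement fails in exactly that degenerate case; the resolution is the convention recalled in Section~2, that the transform orders are partial orders on the equivalence classes of $\mathcal{F}$ modulo $F(x)=G(\kappa x)$, under which such a pair is identified and implicitly set aside. Your proof is therefore finished not by ruling the case out, but by characterizing it: expanding both products into signed sums of exponentials and using linear independence of exponential functions shows that $F_k(x)=G_m(cx)$ for all $x$ forces $k=m$ and the multisets $\{\lambda_i\}$ and $\{c\theta_i\}$ to coincide, so linearity occurs precisely in the scale-equivalent case excluded by the equivalence-class reading of the theorem. With that observation inserted, your argument is correct and in fact more careful than the paper's at the one point where the paper's proof is genuinely terse.
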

\begin{proof}
Denote by $F_k(x)=\prod_{\ell=1}^{k}(1-e^{-\lambda_\ell x})$ and $G_m(x)=\prod_{\ell=1}^{m}(1-e^{-\theta_\ell x})$ the distribution functions of $X_{k:k}$ and $Y_{m:m}$, respectively. The corresponding density functions satisfy $f_k(x)=\lambda_1e^{-\lambda_1x}+P_k(x)$ and $g_m(x)=\theta_1e^{-\theta_1x}+L_m(x)$, where $P_k(x)=o(e^{-\lambda_1x})$ and $L_m(x)=o(e^{-\theta_1x})$. Therefore, choosing $c=\frac{\lambda_1}{\theta_1}$, the densities $f_k$ and $g_m$ satisfy (\ref{NON-CVX}). As $G_m^{-1}(F_k(x))$ is obviously not linear it follows, taking into account Theorem~\ref{thm:comp1}, that $G_m^{-1}(F_k(x))$ is neither convex nor concave, thus concluding the proof.
\end{proof}

\section{Rapid variation and non-comparability}
Although distribution functions, or the corresponding tail functions, are often not available, in some cases it is still possible to compare their behaviour at infinity. This idea is already present in Theorem~\ref{thm:comp1} but the way it is expressed may be inconvenient. An example would be taking the components of the parallel system to have lifetimes that are Gamma distributed. In this section we will prove that, for suitable fast decreasing tails, we may provide alternative criteria for the non-comparability with respect to the convex transform order only based on the comparison of decreasing rates of the tail functions. We start by some definitions and preliminary results concerning the characterization of the decrease rates.
\begin{definition}
A measurable function $f\!:\!(0,\infty)\longrightarrow(0,\infty)$ is said to be regularly varying of index $\alpha \in \mathbb{R}$, denoted $f \in \mathcal{RV}(\alpha)$, if
$$
\forall\lambda>0,\;\lim_{x\rightarrow+\infty}\frac{f(\lambda x)}{f(x)}=\lambda^\alpha.
$$
If $\alpha=0$, then $f$ is said to be slowly varying. Moreover, the function $f$ is said to be rapidly varying of index $+\infty$, denoted $f\in \mathcal{RPV}(+\infty)$, if
$$
 \lim_{x\rightarrow+\infty}\frac{f(\lambda x)}{f(x)}=\begin{cases}
    +\infty, & \text{if $\lambda >1$}\\
    0, & \text{if $\lambda <1$}
  \end{cases}
$$
Finally, $f$ is said to be rapidly varying of index $-\infty$, denoted $f\in \mathcal{RPV}(-\infty)$ if $\frac{1}{f}\in \mathcal{RPV}(+\infty)$
\end{definition}
Note that if $f$ is regularly varying and decreasing, then its index of variation is negative. Next, we introduce similar notions for random variables, referring to the behaviour of the corresponding tail functions.
\begin{definition}
A nonnegative random variable $X$, with tail function $\bF_X$ is said to be regularly, slowly or rapidly varying if $\bF_X$ is regularly, slowly or rapidly varying, respectively.
\end{definition}
The theory of regularly or rapidly varying functions is well established, so we quote here the general characterization of such functions and refer the reader to, for example, Bingham et al.~\cite{BGT89}.
\begin{theorem}[Karamata's Theorem]
\label{Karamata-Theorem}
A positive function $f$ is slowly varying if and only if there exists $B>0$, such that for every $x\geq B$, $f$ can be written in the form
\begin{equation} \label{Karamat-representation}
f(x)=\eta(x)\exp\left( \int_{B}^x\frac{\epsilon(t)}{t}\,dt \right)
\end{equation}
where $\eta(x)>0$ is a measurable function such that $\lim_{x\rightarrow+\infty}\eta(x)$ is finite and positive and $\epsilon(x)$ is a measurable function such that $\lim_{x\rightarrow+\infty}\epsilon(x)=0$.
\end{theorem}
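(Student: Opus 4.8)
The plan is to prove the stated equivalence in two directions, treating the implication ``representation $\Rightarrow$ slow variation'' as routine and the implication ``slow variation $\Rightarrow$ representation'' as the substantial one. For the routine direction I would assume $f$ admits the stated representation, fix $\lambda>0$, and write $f(\lambda x)/f(x)=[\eta(\lambda x)/\eta(x)]\exp\bigl(\int_x^{\lambda x}\epsilon(t)/t\,dt\bigr)$. Substituting $t=xs$ turns the integral into $\int_1^\lambda \epsilon(xs)/s\,ds$. Since $\eta$ tends to a finite positive limit the prefactor tends to $1$, and since $\epsilon(xs)\to0$ as $x\to+\infty$ while the integrand is dominated on the compact interval with endpoints $1$ and $\lambda$, the integral tends to $0$; hence the quotient tends to $1$ and $f$ is slowly varying.

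For the harder direction I would pass to additive coordinates by setting $k(u)=\log f(e^u)$, so that slow variation is exactly the assertion $k(u+v)-k(u)\to0$ as $u\to+\infty$ for each fixed $v$. The decisive ingredient is the \emph{Uniform Convergence Theorem}: for a measurable slowly varying $f$ this convergence is uniform for $v$ ranging over any compact set. I would establish this first, via a Baire category or Steinhaus-type argument exploiting the measurability of $k$, because without it the subsequent averaging step cannot be justified and, incidentally, it is also what yields local boundedness and hence local integrability of $k$.

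Granting uniform convergence, I would smooth $k$ by averaging: define $\phi(u)=\int_u^{u+1}k(w)\,dw$. Then $\phi$ is locally absolutely continuous with $\phi'(u)=k(u+1)-k(u)$, which tends to $0$; moreover $k(u)-\phi(u)=-\int_0^1\bigl(k(u+v)-k(u)\bigr)\,dv\to0$ by the uniform convergence just established. Writing $\phi(u)=\phi(u_0)+\int_{u_0}^u\phi'(s)\,ds$ and regrouping gives $k(u)=\bigl[(k(u)-\phi(u))+\phi(u_0)\bigr]+\int_{u_0}^u\phi'(s)\,ds$, where the bracket converges to a finite limit. Undoing the substitutions $u=\log x$ and $t=e^s$ with $B=e^{u_0}$ recovers exactly $f(x)=\eta(x)\exp\bigl(\int_B^x\epsilon(t)/t\,dt\bigr)$, where $\log\eta(e^u)$ equals the bracket (so $\eta$ has a finite positive limit) and $\epsilon(t)=\phi'(\log t)=\log\bigl(f(et)/f(t)\bigr)\to0$ by slow variation with $\lambda=e$.

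The main obstacle is precisely the Uniform Convergence Theorem. Pointwise convergence of $k(u+v)-k(u)$ is immediate from the definition, but upgrading it to local uniformity for a merely measurable $f$ is the genuinely hard step, and it is exactly what forces the smoothed error term $\epsilon$ to vanish; this is where measurability is indispensable and where I would expect to invest the bulk of the work, the averaging and bookkeeping afterwards being straightforward.
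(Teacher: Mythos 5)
There is nothing in the paper to compare your argument against: the authors state Karamata's Theorem without proof, quoting it as a classical result and referring the reader to Bingham, Goldie and Teugels \cite{BGT89}. Your outline is, in essence, the canonical proof from that very reference (Theorem~1.3.1 there). The easy direction is fine as written; for $\lambda<1$ the interval orientation is the only adjustment, and one can even bypass dominated convergence by bounding $\left\vert\int_x^{\lambda x}\epsilon(t)/t\,dt\right\vert\leq\sup_{t\geq\min(1,\lambda)x}\vert\epsilon(t)\vert\cdot\vert\log\lambda\vert$ directly. The hard direction---passing to $k(u)=\log f(e^u)$, invoking the Uniform Convergence Theorem, averaging over a unit interval via $\phi(u)=\int_u^{u+1}k(w)\,dw$, and unwinding with $\epsilon(t)=\log\bigl(f(et)/f(t)\bigr)$ while $\eta$ absorbs the convergent bracket---is exactly the standard decomposition, and your bookkeeping is correct, including the important remark that the UCT is also what yields local boundedness, hence local integrability of $k$ on some $[u_0,\infty)$, without which $\phi$ is not even defined.

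The one genuine incompleteness is that you assert the Uniform Convergence Theorem rather than prove it: ``a Baire category or Steinhaus-type argument'' names the right circle of ideas but is not yet an argument. Since you correctly identify this as the crux---it is the only place where measurability of $f$ enters---a complete write-up must either execute that step (the usual proof, Theorem~1.2.1 in \cite{BGT89}, runs an Egorov-type translation argument on sets of positive measure) or cite it explicitly. Two cosmetic points: $\phi'(u)=k(u+1)-k(u)$ holds a priori only almost everywhere, so you should define $\epsilon(t)=\log\bigl(f(et)/f(t)\bigr)$ everywhere (measurable, a.e.\ equal to $\phi'(\log t)$, with the same integral); and $\eta(x)=f(x)\exp\bigl(-\int_B^x\epsilon(t)/t\,dt\bigr)$ is then automatically measurable and positive, matching the statement's requirements. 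With the UCT supplied, your proof is complete and coincides with the source the paper points to.
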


Next, we present some simple results useful in the sequel.
\begin{lemma}\label{lemma-SV}
Let $f$ be a slowly varying and $a(x)$ be such that $\lim_{x\rightarrow+\infty}\frac{a(x)}{x}=0$. Then for every $\lambda>0$ we have
$$
\lim_{x\rightarrow+\infty}\frac{f(\lambda x+a(x))}{f(x)}=1.
$$
\end{lemma}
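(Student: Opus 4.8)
The plan is to invoke Karamata's representation from Theorem~\ref{Karamata-Theorem}. Since $f$ is slowly varying, there exists $B>0$ such that for $x\geq B$ we may write $f$ in the form \eqref{Karamat-representation} with $\eta(x)\to\eta_\infty$ finite and positive, and $\epsilon(t)\to 0$. Set $u(x)=\lambda x+a(x)$ and observe that $\frac{u(x)}{x}=\lambda+\frac{a(x)}{x}\to\lambda>0$, so $u(x)\to+\infty$; in particular both $x$ and $u(x)$ exceed $B$ once $x$ is large enough, and the representation applies to both arguments. Substituting and cancelling the common part of the integrals from $B$, the ratio factors as
\[
\frac{f(u(x))}{f(x)}=\frac{\eta(u(x))}{\eta(x)}\,\exp\left(\int_x^{u(x)}\frac{\epsilon(t)}{t}\,dt\right).
\]
The proof then reduces to showing the first factor tends to $1$ and the exponent tends to $0$.

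The first factor is immediate: since $u(x)\to+\infty$ and $x\to+\infty$, both $\eta(u(x))$ and $\eta(x)$ converge to the same positive limit $\eta_\infty$, so their quotient tends to $1$. For the exponent, I would argue as follows. Fix $\delta>0$ and choose $T>B$ with $|\epsilon(t)|<\delta$ for all $t\geq T$. Because $\min(x,u(x))\to+\infty$, for all large $x$ the entire interval with endpoints $x$ and $u(x)$ lies above $T$, regardless of whether $\lambda\geq 1$ (so $u(x)\geq x$) or $\lambda<1$ (so $u(x)<x$). On that interval $|\epsilon(t)|<\delta$, whence
\[
\left|\int_x^{u(x)}\frac{\epsilon(t)}{t}\,dt\right|\leq\delta\left|\int_x^{u(x)}\frac{dt}{t}\right|=\delta\left|\ln\frac{u(x)}{x}\right|.
\]

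Since $\frac{u(x)}{x}\to\lambda$, the right-hand side converges to $\delta\,|\ln\lambda|$, so $\limsup_{x\to+\infty}\bigl|\int_x^{u(x)}\tfrac{\epsilon(t)}{t}\,dt\bigr|\leq\delta\,|\ln\lambda|$. As $\delta>0$ was arbitrary, the integral tends to $0$, hence its exponential tends to $1$, and combining with the $\eta$-quotient gives $\frac{f(u(x))}{f(x)}\to 1$, as claimed. The only delicate point is the uniform control of the integral: one must be sure that the interval of integration, whose orientation depends on the sign of $\ln\lambda$, eventually sits entirely in the region $\{t\geq T\}$ where $\epsilon$ is small, which is guaranteed here precisely because $\frac{a(x)}{x}\to 0$ forces $\frac{u(x)}{x}\to\lambda>0$ and thus $\min(x,u(x))\to+\infty$. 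This also shows where the hypothesis is used, and that no assumption on the sign or monotonicity of $a$ beyond $a(x)=o(x)$ is needed.
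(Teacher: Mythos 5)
Your proof is correct and takes essentially the same route as the paper's: both invoke Karamata's representation (Theorem~\ref{Karamata-Theorem}), reduce the claim to showing the exponential's argument vanishes, and bound that integral by $\delta\left\vert\log\frac{\lambda x+a(x)}{x}\right\vert\rightarrow\delta\left\vert\log\lambda\right\vert$ with $\delta>0$ arbitrary. Your explicit handling of the $\eta$-quotient and of the integration interval eventually lying in the region where $\vert\epsilon\vert<\delta$ merely spells out details the paper leaves implicit.
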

\begin{proof}
Using Karamata's Theorem, we have that
$$
\lim_{x\rightarrow+\infty}\frac{f(\lambda x+a(x))}{f(x)}
=\lim_{x\rightarrow+\infty}\frac{\eta(\lambda x+a(x))}{\eta(x)}
   \exp\left(\int_{B}^{\lambda x+a(x)}\frac{\epsilon(t)}{t}\,dt -\int_{B}^x\frac{\epsilon(t)}{t}\,dt\right).
$$
To prove the result, it is enough to prove that the argument of the exponential above goes to zero when $x\longrightarrow+\infty$. As $\lim_{x\rightarrow+\infty}\epsilon(x)=0$, given any $\delta>0$, there exists $A>0$ such that for every $x>A$, $\left\vert\epsilon(x)\right\vert<\delta$. Hence
\begin{equation}\label{zero}
T(x)=\left\vert\int_{B}^{\lambda x+a(x)}\frac{\epsilon(t)}{t}\,dt -\int_{B}^x\frac{\epsilon(t)}{t}\,dt\right\vert\leq \delta \int_{I}\frac{1}{t}\,dt,
\end{equation}
where $I=[x,\lambda x+a(x)]$ if $\lambda>1$, and $[\lambda x+a(x), x]$ if $\lambda\leq1$. Now, if $\lambda>1$, it follows that $T(x)\leq\delta\log\frac{\lambda x+a(x)}{x}$, so $\lim_{x\rightarrow+\infty}T(x)\leq\delta\log\lambda$. If we now assume that $\lambda\leq 1$, it follows that, for $x$ large enough, $T(x)\leq\delta\log\frac{x}{\lambda x+a(x)}$, hence $\lim_{x\rightarrow+\infty}T(x)\leq\delta\log\frac1\lambda$. As $\delta>0$ is arbitrary, it follows that $\lim_{x\rightarrow+\infty}T(x)=0$, so the proof is concluded.
\end{proof}

\begin{corollary}\label{corollary-RV}
Let $f\in\mathcal{RV}(\alpha)$, $a(x)$ be such that $\lim_{x\rightarrow+\infty}\frac{a(x)}{x}=0$, and $b, c>0$. Then
$$
\lim_{x\rightarrow+\infty}\frac{f(bx+a(x))}{f(cx)}=\left(\frac{b}{c}\right)^\alpha
$$
where $\alpha$ is the index of regular variation of $f$ and $\lim_{x\rightarrow+\infty}\frac{a(x)}{x}=0$.
\end{corollary}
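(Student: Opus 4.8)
The plan is to reduce everything to the slowly varying case already handled by Lemma~\ref{lemma-SV}. The natural first step is to factor out the power that accounts for the index $\alpha$. Define $L(x)=x^{-\alpha}f(x)$; then $f(x)=x^{\alpha}L(x)$, and I would first check that $L$ is slowly varying. This is immediate from the definition of $\mathcal{RV}(\alpha)$, since for every $\lambda>0$,
$$
\lim_{x\rightarrow+\infty}\frac{L(\lambda x)}{L(x)}
=\lim_{x\rightarrow+\infty}\lambda^{-\alpha}\frac{f(\lambda x)}{f(x)}
=\lambda^{-\alpha}\lambda^{\alpha}=1,
$$
so that $L\in\mathcal{RV}(0)$. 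This makes $L$ eligible for Lemma~\ref{lemma-SV}.

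Next I would split the target ratio into a power factor and a slowly varying factor:
$$
\frac{f(bx+a(x))}{f(cx)}
=\left(\frac{bx+a(x)}{cx}\right)^{\alpha}\cdot\frac{L(bx+a(x))}{L(cx)}.
$$
The power factor is elementary: writing $\frac{bx+a(x)}{cx}=\frac{b+a(x)/x}{c}$ and using the hypothesis $\lim_{x\rightarrow+\infty}a(x)/x=0$, it converges to $\bigl(\frac{b}{c}\bigr)^{\alpha}$ (the map $t\mapsto t^{\alpha}$ being continuous at $b/c>0$).

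For the slowly varying factor, I would insert $L(x)$ and treat the two resulting quotients separately:
$$
\frac{L(bx+a(x))}{L(cx)}=\frac{L(bx+a(x))}{L(x)}\cdot\frac{L(x)}{L(cx)}.
$$
Applying Lemma~\ref{lemma-SV} with $\lambda=b$ and the given perturbation $a(x)$ gives $L(bx+a(x))/L(x)\rightarrow 1$, while applying the same lemma with $\lambda=c$ and the admissible choice $a\equiv 0$ gives $L(cx)/L(x)\rightarrow 1$, hence $L(x)/L(cx)\rightarrow 1$ as well. Multiplying these two limits (each finite and nonzero) yields $L(bx+a(x))/L(cx)\rightarrow 1$. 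Combining the power factor with this factor gives the claimed limit $\bigl(\frac{b}{c}\bigr)^{\alpha}$.

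The argument is essentially routine once Lemma~\ref{lemma-SV} is in hand, so I do not anticipate a genuine obstacle; the only points requiring a little care are verifying that $L$ is slowly varying (so the lemma applies) and justifying that the limit of the product equals the product of the limits, which is legitimate here because each factor tends to a finite, nonzero value.
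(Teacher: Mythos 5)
Your proof is correct and takes essentially the same route as the paper: both write $f(x)=x^{\alpha}L(x)$ with $L$ slowly varying and reduce the claim to Lemma~\ref{lemma-SV}. The paper compresses this into one line, while you make explicit the two details it leaves implicit --- verifying that $L\in\mathcal{RV}(0)$ and splitting the ratio into the power factor and the two applications of the lemma (with perturbation $a(x)$ and with $a\equiv 0$) --- which is exactly the intended argument.
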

\begin{proof}
As $f\in \mathcal{RV}(\alpha)$, there exists a slowly varying function $L(x)$ such that $f(x)=x^\alpha L(x)$, so the conclusion follows immediately from the definition of slow variation and Lemma~\ref{lemma-SV}.
\end{proof}

\begin{theorem}\label{EQ-RV}
Let $f\in\mathcal{RPV}(+\infty)$ be monotone, and assume $\phi$ and $\psi$ real functions such that $\liminf_{x\rightarrow+\infty}\frac{\phi(x)}{\psi(x)}>1$ and $\lim_{x\rightarrow+\infty}\psi(x)=+\infty$. Then $\lim_{x\rightarrow+\infty}\frac{f(\phi(x))}{f(\psi(x))}=+\infty$.
\end{theorem}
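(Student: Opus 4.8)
The plan is to combine the definition of rapid variation with the monotonicity of $f$ and a careful use of the $\liminf$ hypothesis. First I would observe that a monotone $f\in\mathcal{RPV}(+\infty)$ must be (eventually) increasing: indeed, for $\lambda>1$ the definition gives $f(\lambda x)/f(x)\to+\infty$, which forces $f(\lambda x)>f(x)$ for large $x$, incompatible with $f$ being decreasing. Hence I may assume $f$ is increasing on a neighbourhood of $+\infty$.

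Next, from $\liminf_{x\to+\infty}\phi(x)/\psi(x)>1$ I would fix a constant $\lambda$ with $1<\lambda<\liminf_{x\to+\infty}\phi(x)/\psi(x)$. By the definition of $\liminf$, there is $A>0$ such that $\phi(x)/\psi(x)\geq\lambda$, i.e. $\phi(x)\geq\lambda\psi(x)$, for all $x\geq A$; enlarging $A$ if necessary I also ensure $\psi(x)>0$ there, so that all arguments lie in the domain $(0,\infty)$. Since $f$ is increasing, monotonicity then yields $f(\phi(x))\geq f(\lambda\psi(x))$ for $x\geq A$, and therefore
$$
\frac{f(\phi(x))}{f(\psi(x))}\geq\frac{f(\lambda\psi(x))}{f(\psi(x))},\qquad x\geq A.
$$

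Finally I would pass to the limit on the right-hand side. Writing $y=\psi(x)$ and using $\lim_{x\to+\infty}\psi(x)=+\infty$, the composition-of-limits principle together with the rapid-variation property $\lim_{y\to+\infty}f(\lambda y)/f(y)=+\infty$ (valid precisely because $\lambda>1$) gives $\lim_{x\to+\infty}f(\lambda\psi(x))/f(\psi(x))=+\infty$. The displayed lower bound then forces $f(\phi(x))/f(\psi(x))\to+\infty$, which is the claim.

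As for the main obstacle: the argument is essentially elementary once the ingredients are lined up, so the only real care is needed in two places. The first is justifying that monotone together with $\mathcal{RPV}(+\infty)$ means \emph{increasing}, so that the inequality $\phi\geq\lambda\psi$ is preserved under $f$. The second is correctly turning the $\liminf$ hypothesis into a genuine eventual lower bound $\phi(x)\geq\lambda\psi(x)$ for a single fixed $\lambda>1$ to which rapid variation applies. The hypothesis $\psi\to+\infty$ is precisely what licenses the composition of limits in the last step and so cannot be dropped.
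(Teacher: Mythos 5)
Your proof is correct and follows essentially the same route as the paper's very terse proof: convert the $\liminf$ hypothesis into an eventual one-sided bound $\phi(x)\geq\lambda\psi(x)$ for a fixed $\lambda>1$, use the monotonicity of $f$ (which, as you rightly justify, must be increasing, since $f(\lambda x)/f(x)\to+\infty$ for $\lambda>1$ is incompatible with $f$ non-increasing), and then compose the rapid-variation limit $f(\lambda y)/f(y)\to+\infty$ with $\psi(x)\to+\infty$. If anything, your write-up is more careful than the paper's two-line sketch, which asserts a two-sided bound $(1-\varepsilon)\psi(x)\leq\phi(x)\leq(1+\varepsilon)\psi(x)$ that neither follows from the $\liminf$ hypothesis nor is needed; your one-sided lower bound is exactly the correct ingredient.
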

\begin{proof}
For every $\varepsilon>0$, and for $x$ large enough we have that $(1-\varepsilon)\psi(x)\leq\phi(x)\leq(1+\varepsilon)\psi(x)$. The conclusion follows immediately using the monotonicity of $f$.
\end{proof}
In order to prove non-comparability results with respect to the convex transform order, assuming the tail functions are of class $\mathcal{RPV}(-\infty)$ seems not enough (see Remark~\ref{remark_classes} below for some counterexamples). We need to introduce a suitable subclass of functions.
\begin{definition}
A measurable function $f\!:\!(0,\infty)\longrightarrow(0,\infty)$ is said to be exponentially rapidly varying of index $+\infty$, denoted $f\in \mathcal{ERPV}(+\infty)$, if $f\in\mathcal{RPV}(+\infty)$ and verifies
$$
\left( \lim_{x\rightarrow+\infty}\frac{\phi(x)}{x}=1,\;\mbox{and}\; \lim_{x\rightarrow+\infty}\phi(x)-x\neq 0\right)\;\Rightarrow\; \lim_{x\rightarrow+\infty}\frac{f(\phi(x))}{f(x)}\neq 1.
$$
The function $f$ is said to be exponentially rapidly varying of index $-\infty$, denoted $f\in \mathcal{RPV}(-\infty)$, if $\frac{1}{f}\in \mathcal{ERPV}(+\infty)$.

As before, the random variable $X$, with tail function $\bF_X$, is said exponentially rapidly varying if $\bF_X\in\mathcal{ERPV}(-\infty)$.
\end{definition}
\begin{remark}
The inclusion $\mathcal{ERPV}(+\infty)\subset\mathcal{RPV}(+\infty)$ is strict: it is easily verified that $f(x)=e^{\log^2(x+1)}$ is in the class $\mathcal{RPV}(+\infty)$, but $f\not\in\mathcal{ERPV}(+\infty)$.
\end{remark}

\begin{theorem}\label{NON-Convexity}
Let $\bF,\bG\in\mathcal{ERPV}(-\infty)$ be two tail functions. If there exists $c>0$, such that $\lim_{x\rightarrow+\infty}\frac{\bF(x)}{\bG(cx)}=1$ then, $\lim_{x\rightarrow +\infty}(\bG^{-1}(\bF(x))-cx)=0$.
\end{theorem}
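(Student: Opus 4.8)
The plan is to study the increasing function $\phi(x)=\bG^{-1}(\bF(x))$, whose defining relation is $\bG(\phi(x))=\bF(x)$, and to show directly that $\phi(x)-cx\to0$. Rewriting the hypothesis in terms of $\phi$, it reads $\frac{\bG(\phi(x))}{\bG(cx)}\to1$, and equivalently the inverse ratio $\frac{\bG(cx)}{\bF(x)}\to1$. Throughout I would use that $\bF$ and $\bG$ are decreasing (being tail functions) and that, since $\bF,\bG\in\mathcal{ERPV}(-\infty)$, their reciprocals $1/\bF,1/\bG$ lie in $\mathcal{ERPV}(+\infty)\subset\mathcal{RPV}(+\infty)$.

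The argument proceeds in two stages. First I would establish the first-order statement $\lim_{x\to+\infty}\frac{\phi(x)}{cx}=1$, using only the rapid variation $\bG\in\mathcal{RPV}(-\infty)$ together with monotonicity (this is close to the content of Theorem~\ref{EQ-RV}, though it is cleanest to argue it directly). Fix $\lambda>1$; if $\phi(x)>\lambda cx$ held along some sequence $x_n\to+\infty$, then monotonicity of $\bG$ would give $\bF(x_n)=\bG(\phi(x_n))\leq\bG(\lambda cx_n)$, whence $\frac{\bF(x_n)}{\bG(cx_n)}\leq\frac{\bG(\lambda cx_n)}{\bG(cx_n)}$. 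The right-hand side tends to $0$ because $\bG\in\mathcal{RPV}(-\infty)$ and $\lambda>1$, contradicting $\frac{\bF(x_n)}{\bG(cx_n)}\to1$. Hence $\limsup\frac{\phi(x)}{cx}\leq\lambda$, and letting $\lambda\downarrow1$ gives $\limsup\frac{\phi(x)}{cx}\leq1$. The symmetric argument with $\mu<1$ (using $\frac{\bG(\mu cx)}{\bG(cx)}\to+\infty$) yields $\liminf\frac{\phi(x)}{cx}\geq1$, so that $\frac{\phi(x)}{cx}\to1$.

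The second stage is where the stronger \emph{exponential} rapid variation is needed, and it is the crux of the proof. The idea is to set $u=cx$ and $\chi(u)=\phi(u/c)$, so that the first-stage conclusion becomes $\frac{\chi(u)}{u}\to1$, and to feed this into the defining property of $f=1/\bG\in\mathcal{ERPV}(+\infty)$. Indeed, one computes $\frac{f(\chi(u))}{f(u)}=\frac{\bG(u)}{\bG(\phi(u/c))}=\frac{\bG(u)}{\bF(u/c)}$, which tends to $1$ by the hypothesis (in the form $\frac{\bG(cx)}{\bF(x)}\to1$). Now apply the contrapositive of the $\mathcal{ERPV}(+\infty)$ definition: since $\frac{\chi(u)}{u}\to1$ and $\frac{f(\chi(u))}{f(u)}\to1$, it is impossible that $\chi(u)-u\not\to0$, so necessarily $\chi(u)-u\to0$. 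Unwinding the substitution, $\chi(u)-u=\phi(u/c)-u$ evaluated at $u=cx$ is exactly $\phi(x)-cx$, giving $\lim_{x\to+\infty}(\bG^{-1}(\bF(x))-cx)=0$, as claimed.

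The main obstacle is the logical handling of the $\mathcal{ERPV}$ definition in the second stage: one must verify that the manufactured function $\chi(u)=\phi(u/c)$ genuinely satisfies the antecedent $\chi(u)/u\to1$ (supplied by the first stage) and that the ratio $f(\chi(u))/f(u)$ reduces, after cancellation via $\bG\circ\phi=\bF$ and the change of variable $u=cx$, to the hypothesised ratio; only then does the contrapositive apply and force $\chi(u)-u\to0$. A secondary point to be careful about is that the first stage must be settled \emph{before} invoking $\mathcal{ERPV}$, since the definition requires the first-order relation $\chi(u)/u\to1$ as a precondition — this is precisely why mere membership in $\mathcal{RPV}(-\infty)$ does not suffice and the refined class $\mathcal{ERPV}(-\infty)$ is indispensable.
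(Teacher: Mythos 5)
Your proof is correct, and it uses the same two key ingredients as the paper --- rapid variation of $\bG$ to pin down the first-order behaviour of $\phi(x)=\bG^{-1}(\bF(x))$, and the $\mathcal{ERPV}$ property to upgrade this to $\phi(x)-cx\to0$ --- but organized differently, and in one respect more carefully. The paper argues by contradiction: writing $\bG^{-1}(\bF(x))=cx+a(x)$ with $\lim a(x)\neq0$, it splits into the cases $a(x)/x\to0$ (handled by the $\mathcal{ERPV}(-\infty)$ assumption, yielding $\bF(x)/\bG(cx)\not\to1$), and $a(x)/x\to b_1\neq0$ or $+\infty$ (handled by Theorem~\ref{EQ-RV}, forcing the ratio to $0$ or $+\infty$). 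This trichotomy tacitly assumes that $\lim a(x)/x$ exists; an oscillating $a(x)/x$ is not covered. Your first stage closes exactly that gap: the $\limsup/\liminf$ argument with sequences $x_n$ and the monotonicity of $\bG$ establishes $\phi(x)/(cx)\to1$ unconditionally, essentially reproving the content of Theorem~\ref{EQ-RV} in a form that does not presuppose convergence of $\phi(x)/(cx)$. Your second stage is then the paper's case~(1) in contrapositive form: since $\chi(u)/u\to1$ and $f(\chi(u))/f(u)=\bG(u)/\bF(u/c)\to1$ with $f=1/\bG\in\mathcal{ERPV}(+\infty)$, the defining implication of $\mathcal{ERPV}$ forces $\chi(u)-u\to0$. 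One caveat you share with the paper: the $\mathcal{ERPV}$ definition's clause ``$\lim_{x\rightarrow+\infty}\phi(x)-x\neq 0$'' must be read as ``it is not the case that the limit is $0$'' (rather than ``the limit exists and is nonzero'') for your contrapositive to deliver $\chi(u)-u\to0$; this is the reading the paper itself adopts in case~(1) of its proof, so your use is consistent with it.
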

\begin{proof}
Assume that $\lim_{x\rightarrow+\infty}\bigl(\bG^{-1}(\bF(x))-cx\bigr)\neq0$. Then, for $x$ large enough, we may write $\bG^{-1}(\bF(x))=cx+a(x)$, where $\lim_{x\rightarrow+\infty}a(x)\neq0$. We consider the three following cases:
\begin{enumerate}
\item
$\lim_{x\rightarrow+\infty}\frac{a(x)}{x}=0$. In this case, as $\bG\in\mathcal{ERPV}(-\infty)$, it follows that
$$
\lim_{x\rightarrow+\infty}\frac{\bF(x)}{\bG(cx)}=\lim_{x\rightarrow+\infty} \frac{\bG(cx+a(x))}{\bG(cx)}\neq1,
$$
as $\lim_{x\rightarrow+\infty} \frac{cx+a(x)}{cx}=1$ and $\lim_{x\rightarrow+\infty}(cx+a(x)-cx)=\lim_{x\rightarrow+\infty}a(x)\neq0$.

\item
$\lim_{x\rightarrow+\infty}\frac{a(x)}{x}=b_1>0$. Then   $\lim_{x\rightarrow+\infty}\frac{cx+a(x)}{cx}=\frac{c+b_1}{c}$, and, taking into account Theorem~\ref{EQ-RV}, it follows that
$$
\lim_{x\rightarrow+\infty} \frac{\overline{F}(x)}{\overline{G}(cx)}=\lim_{x\rightarrow+\infty} \frac{\overline{G}(cx+a(x))}{\overline{G}(cx)}=
\begin{cases}
+\infty, & \mbox{if $b_1<0$}\\
    0, & \mbox{if $b_1 >0$}
\end{cases}
$$

\item
$\lim_{x\rightarrow+\infty}\frac{a(x)}{x}=+\infty$. In this case we have $\lim_{x\rightarrow+\infty}\frac{cx+a(x)}{cx}=+\infty$, and,  taking again into account Theorem~\ref{EQ-RV}, it follows that
$$
\lim_{x\rightarrow+\infty}\frac{\bF(x)}{\bG(cx)}=\lim_{x\rightarrow+\infty} \frac{\bG(cx+a(x))}{\bG(cx)}=0.
$$
\end{enumerate}
\end{proof}
\begin{remark} \label{remark_classes}
The assumption $\bG\in\mathcal{ERPV}(-\infty)$ is only used to prove the first case in the proof of Theorem~\ref{NON-Convexity}. However, we cannot relax this assumption to any of the larger classes introduced above. Indeed, it follows from Lemma~\ref{lemma-SV} and Corollary~\ref{corollary-RV}, that if $\bG\in\mathcal{RV}(\alpha)$, for any real $\alpha$, and $\bF(x)=\bG(x+a(x))$ where $\lim_{x\rightarrow+\infty}\frac{a(x)}{x}=0$ and $\lim_{x\rightarrow+\infty} a(x)\neq0$, then we will have $\lim_{x\rightarrow+\infty}\frac{\overline{F}(x)}{\overline{G}(x)}=1$ and $\lim_{x\rightarrow+\infty}\bigl(\bG^{-1}(\bF(x))-bx\bigr)\neq0$ for every $b>0$.

We exhibit next a few concrete examples of the above, and also showing that if we assume only that $\bG\in\mathcal{RPV}(-\infty)$ the conclusion of Theorem~\ref{NON-Convexity} may fail:
\begin{enumerate}
\item
Take $G(x)=\frac{1}{ln(x+1)+1}$ and $\bF_d(x)=\bG(dx+\sqrt{x})$. Then both tails are slowly varying and it is easy to verify that $\lim_{x\rightarrow+\infty}\frac{\overline{F}(x)}{\overline{G}(x)}=1$ and
$$
\lim_{x\rightarrow+\infty}\left(\bG^{-1}(\bF_d(x))-bx\right)=\lim_{x\rightarrow+\infty}\left((d-b)x+\sqrt{x}\right)=
\begin{cases}
+\infty, & \mbox{if $d\geq b$}\\
-\infty, & \mbox{if $d<b$}.
\end{cases}
$$

\item
Choose now $G(x)=\frac{1}{x^2+1}$ and $\bF(x)=\bG(x+\sqrt{x})$, so $\bF,\bG\in\mathcal{RV}(-2)$. Then, as for the previous case, we have that $\lim_{x\rightarrow+\infty}\frac{\bF(x)}{\bG(x)}=1$ and
$$
\lim_{x\rightarrow+\infty}\left(\bG^{-1}(\bF(x))-bx\right)=\lim_{x\rightarrow+\infty}\left((1-b)x+\sqrt{x}\right)=
\begin{cases}
+\infty, & \mbox{if $b\leq1$}\\
-\infty, & \mbox{if $b>1$}.
\end{cases}
$$

\item
Finally, an example showing that even under the assumption $\bG\in\mathcal{RPV}(-\infty)$ the asymptotic approximation to a linear function may fail. Consider $\bG(x)=e^{-\log^2(x+1)}$ and $\bF(x)=\bG(x+\log(x+1))$. It is easily verified that $\bF,\bG\in\mathcal{RPV}(-\infty)$. Moreover, it may also be verified, by simple calculus, that $\lim_{x\rightarrow+\infty}\frac{\bF(x)}{\bG(x)}=1$ and
$$
\lim_{x\rightarrow+\infty}\left(\bG^{-1}(\bF(x))-bx\right)=\lim_{x\rightarrow+\infty}\left((1-b)x+\log(x+1)\right)
=\begin{cases}
+\infty, & \mbox{if $b\leq1$}\\
-\infty, & \mbox{if $b>1$}.
\end{cases}
$$
\end{enumerate}
\end{remark}
The assumption in Theorem~\ref{NON-Convexity} deals with tail functions. A version with an assumption on density functions is immediate.
\begin{corollary}
\label{cor:NON-Convexity}
Let $X$ and $Y$ be two exponentially rapidly varying random variables with corresponding densities $f$ and $g$. If $\lim_{x\rightarrow+\infty}\frac{f(x)}{cg(cx)}=1$, then $X$ and $Y$ are not comparable with respect to the convex transform order.
\end{corollary}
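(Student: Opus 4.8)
The plan is to reduce the density hypothesis to the tail hypothesis of Theorem~\ref{NON-Convexity} and then invoke that theorem together with Theorem~\ref{CONV_Characterization}. Write $\bF$ and $\bG$ for the tail functions of $X$ and $Y$; by hypothesis $X$ and $Y$ are exponentially rapidly varying, so $\bF,\bG\in\mathcal{ERPV}(-\infty)$, and these are exactly the functions to which Theorem~\ref{NON-Convexity} applies.

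The first and central step is to translate $\lim_{x\rightarrow+\infty}\frac{f(x)}{cg(cx)}=1$ into $\lim_{x\rightarrow+\infty}\frac{\bF(x)}{\bG(cx)}=1$. The key observation is the substitution $t=cu$, which rewrites the tail of $Y$ at $cx$ as
$$
\bG(cx)=\int_{cx}^{+\infty}g(t)\,dt=\int_{x}^{+\infty}cg(cu)\,du,
$$
so that both $\bF(x)=\int_x^{+\infty}f(u)\,du$ and $\bG(cx)$ are tails of the positive integrands $f(u)$ and $cg(cu)$, whose ratio tends to $1$. Then, fixing $\varepsilon>0$ and choosing $A$ so large that $(1-\varepsilon)cg(cu)\leq f(u)\leq(1+\varepsilon)cg(cu)$ for all $u\geq A$, integration over $[x,+\infty)$ for $x\geq A$ gives
$$
(1-\varepsilon)\bG(cx)\leq \bF(x)\leq(1+\varepsilon)\bG(cx).
$$
Dividing by $\bG(cx)>0$ and letting $\varepsilon\rightarrow0$ yields $\lim_{x\rightarrow+\infty}\frac{\bF(x)}{\bG(cx)}=1$, which is precisely the hypothesis of Theorem~\ref{NON-Convexity}.

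With the tail condition established, Theorem~\ref{NON-Convexity} gives $\lim_{x\rightarrow+\infty}\bigl(\bG^{-1}(\bF(x))-cx\bigr)=0$. Setting $h(x)=\bG^{-1}(\bF(x))$, we are in the situation of Theorem~\ref{CONV_Characterization} with slope $b=c$ and intercept $0$: the function $h$ is increasing, satisfies $h(0)=0$, and $h(x)-cx\rightarrow0$. By part~(\textit{3}) of that theorem, either $h(x)=cx$ identically or $h$ is neither convex nor concave; in the latter case $\bG^{-1}(\bF(x))$ is neither convex nor concave, which by Definition~\ref{DEF S-IFR} means that $X$ and $Y$ are not comparable with respect to the convex transform order.

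I expect the only genuinely delicate point to be the passage from the density ratio to the tail ratio: it rests on the substitution that exhibits $\bG(cx)$ as the tail of $cg(cu)$ and on the fact that a pointwise ratio of positive integrands converging to $1$ forces the corresponding convergent tail integrals to have ratio converging to $1$ -- an argument that needs the tails to be positive (guaranteed here) and the density bounds to hold for all large $u$. The remaining subtlety is the degenerate alternative $h(x)=cx$, i.e.\ $\bF(x)=\bG(cx)$ for large $x$, in which $X$ and $Y$ would in fact be comparable (indeed equivalent); outside this scale-equivalent case the stated non-comparability holds.
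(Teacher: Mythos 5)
Your proof is correct and follows exactly the route the paper intends when it calls this corollary ``immediate'': integrating the $\varepsilon$-bounds $(1-\varepsilon)cg(cu)\leq f(u)\leq(1+\varepsilon)cg(cu)$ over $[x,+\infty)$ to pass from the density ratio to $\lim_{x\rightarrow+\infty}\frac{\bF(x)}{\bG(cx)}=1$, then applying Theorem~\ref{NON-Convexity} and Theorem~\ref{CONV_Characterization}. Your closing remark about the degenerate alternative $\bF(x)=\bG(cx)$ is also well taken, since in that scale-equivalent case the variables are in fact equivalent for the convex transform order, a caveat the paper's statement leaves implicit.
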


\begin{example}
\label{ex:gen-kochar}
The previous corollary provides an easy alternative proof for the result stated in Theorem~\ref{thm:gen-kochar}. Indeed, with the notation introduced for the proof of Theorem~\ref{thm:gen-kochar}, it follows easily, choosing again $c=\frac{\lambda_1}{\theta_1}$, that $\lim_{x\rightarrow+\infty}\frac{f_k(x)}{cg_m(cx)}=1$, hence $\bG_m^{-1}(\bF_k(x))$ is neither convex nor concave.
\end{example}

\section{Applications}
In this section we present a few results about the non-comparability of parallel systems with components that have distributions other than the exponential.
\begin{proposition}
\label{thm:WEIBULL}
Let $X_1,\ldots,X_n$ be independent random variables with Weibull distributions with the same shape parameter $\alpha>0$ and scale parameters $0<\lambda_1<\lambda_2\leq\cdots\leq\lambda_n$. Analogously, let $Y_1,\ldots,Y_n$ be independent random variables with Weibull distributions with the same shape parameter $\alpha$ and scale parameters $0<\theta_1<\theta_2\leq\cdots\leq\theta_n$.
Then, for every integers $m$ and $k$, $X_{k:k}$ and $Y_{m:m}$ are not comparable with respect to the convex transform order.
\end{proposition}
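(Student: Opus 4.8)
The plan is to follow the pattern of the exponential case (Theorem~\ref{thm:gen-kochar} and Example~\ref{ex:gen-kochar}) and reduce the whole question to the behaviour at infinity of the composition $C_{k,m}(x)=G_m^{-1}(F_k(x))$, where $F_k(x)=\prod_{\ell=1}^{k}\bigl(1-e^{-(\lambda_\ell x)^\alpha}\bigr)$ and $G_m(x)=\prod_{\ell=1}^{m}\bigl(1-e^{-(\theta_\ell x)^\alpha}\bigr)$ are the distribution functions of $X_{k:k}$ and $Y_{m:m}$. Since $F_k,G_m\in\mathcal{F}$, the map $C_{k,m}$ is increasing with $C_{k,m}(0)=0$, so Theorem~\ref{CONV_Characterization} applies to it. With $c=\lambda_1/\theta_1$, the target is the limit $\lim_{x\to+\infty}\bigl(C_{k,m}(x)-cx\bigr)=0$; because $C_{k,m}$ is not of the form $x\mapsto cx$ (the two systems are not scale versions of one another), case~(3) of Theorem~\ref{CONV_Characterization} then forces $C_{k,m}$ to be neither convex nor concave, which is exactly the non-comparability of $X_{k:k}$ and $Y_{m:m}$ with respect to $\leq_{\IFR}$.

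First I would isolate the dominant tail. As $\lambda_1<\lambda_2\le\cdots$, expanding the product yields $\overline{F}_k(x)=e^{-(\lambda_1 x)^\alpha}\bigl(1+\rho_k(x)\bigr)$, where the relative error is governed by the second smallest rate, $\rho_k(x)=O\bigl(e^{-(\lambda_2^\alpha-\lambda_1^\alpha)x^\alpha}\bigr)$, and likewise $\overline{G}_m(x)=e^{-(\theta_1 x)^\alpha}\bigl(1+\sigma_m(x)\bigr)$ with $\sigma_m(x)=O\bigl(e^{-(\theta_2^\alpha-\theta_1^\alpha)x^\alpha}\bigr)$. The strict inequalities $\lambda_1<\lambda_2$ and $\theta_1<\theta_2$ make both correction exponents strictly positive, so $\rho_k$ and $\sigma_m$ decay faster than every power of $x$; and the choice $c=\lambda_1/\theta_1$ is exactly what aligns the leading exponents, since $\theta_1 c=\lambda_1$.

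Then I would invert. Writing the defining relation $\overline{G}_m(C_{k,m}(x))=\overline{F}_k(x)$ and taking logarithms gives $(\theta_1 C_{k,m}(x))^\alpha=(\lambda_1 x)^\alpha+E(x)$, with $E(x)=\log\!\bigl(1+\sigma_m(C_{k,m}(x))\bigr)-\log\!\bigl(1+\rho_k(x)\bigr)=O(e^{-\kappa x^\alpha})$ for a suitable $\kappa>0$ (here one uses the leading-order fact that $C_{k,m}(x)\sim cx$ to bound $\sigma_m(C_{k,m}(x))$). Solving for $C_{k,m}$ and expanding the $\alpha$-th root produces $C_{k,m}(x)=cx+O\bigl(x^{1-\alpha}e^{-\kappa x^\alpha}\bigr)$, and since the exponential factor overwhelms the polynomial one for every $\alpha>0$, the required limit follows and Theorem~\ref{CONV_Characterization} closes the argument.

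The hard part will be this last estimate in the range $0<\alpha<1$. There the inversion of the $\alpha$-th power introduces the \emph{growing} factor $x^{1-\alpha}$, so the limit $C_{k,m}(x)-cx\to0$ survives only because the tail corrections $\rho_k,\sigma_m$ are exponentially small in $x^\alpha$ and therefore dominate $x^{1-\alpha}$; the leading Weibull terms of $\overline{F}_k$ and $\overline{G}_m$ by themselves do not decide the question when $\alpha<1$. This is also why the quicker density route would need care: computing $f_k(x)\sim\alpha\lambda_1^\alpha x^{\alpha-1}e^{-(\lambda_1 x)^\alpha}$ and the analogue for $g_m$ gives $f_k(x)/\bigl(cg_m(cx)\bigr)\to1$, so Corollary~\ref{cor:NON-Convexity} applies immediately when $\alpha\ge1$, where the Weibull tails belong to $\mathcal{ERPV}(-\infty)$; for $0<\alpha<1$ those tails are not exponentially rapidly varying, and the explicit inversion above (equivalently, a direct verification of the hypothesis of Theorem~\ref{thm:comp1}) is the robust substitute.
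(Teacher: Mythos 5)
Your proposal is correct, and for $0<\alpha<1$ it is actually more careful than the paper's own proof. The paper proceeds exactly along your ``quick density route'': it writes $f_k(x)=\alpha x^{\alpha-1}\bigl(\lambda_1^\alpha e^{-\lambda_1^\alpha x^\alpha}+P_k(x)\bigr)$ with $P_k(x)=o(e^{-\lambda_1^\alpha x^\alpha})$, chooses $c=\lambda_1/\theta_1$, checks $\lim_{x\rightarrow+\infty}f_k(x)/(cg_m(cx))=1$, and invokes Corollary~\ref{cor:NON-Convexity} after asserting that both tails lie in $\mathcal{ERPV}(-\infty)$, with no restriction on $\alpha$. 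As you observed, that assertion fails when $0<\alpha<1$: taking $\phi(x)=x+d$ with $d\neq0$, one has $(x+d)^\alpha-x^\alpha\sim\alpha d x^{\alpha-1}\rightarrow0$, hence $\bG_m(\phi(x))/\bG_m(x)\rightarrow1$ even though $\lim_{x\rightarrow+\infty}(\phi(x)-x)=d\neq0$, so $1/\bG_m$ violates the implication defining $\mathcal{ERPV}(+\infty)$; Weibull-type tails are exponentially rapidly varying only for $\alpha\geq1$. Thus the paper's proof, read literally, covers only $\alpha\geq1$, where it coincides with your density argument, while your explicit inversion $(\theta_1 C_{k,m}(x))^\alpha=(\lambda_1 x)^\alpha+E(x)$ with $E(x)=O(e^{-\kappa x^\alpha})$, yielding $C_{k,m}(x)=cx+O\bigl(x^{1-\alpha}e^{-\kappa x^\alpha}\bigr)$, closes the gap for all $\alpha>0$: the a priori estimate $C_{k,m}(x)\sim cx$ needed to control $\sigma_m(C_{k,m}(x))$ follows by sandwiching $\bF_k(x)$ between $\bG_m((1\pm\epsilon)cx)$ using rapid variation, and the exponential factor indeed dominates the growing power $x^{1-\alpha}$. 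Two details worth making explicit in a final write-up: the remainder bounds $\rho_k(x)=O\bigl(e^{-(\lambda_2^\alpha-\lambda_1^\alpha)x^\alpha}\bigr)$ and $\sigma_m(x)=O\bigl(e^{-(\theta_2^\alpha-\theta_1^\alpha)x^\alpha}\bigr)$ use the strict inequalities $\lambda_1<\lambda_2$ and $\theta_1<\theta_2$ from the hypothesis; and the appeal to case~(3) of Theorem~\ref{CONV_Characterization} requires $C_{k,m}(x)\not\equiv cx$, i.e., excluding the degenerate case $k=m$ and $\lambda_\ell=c\theta_\ell$ for all $\ell$, in which the two systems are scale copies of each other and hence trivially equivalent --- a caveat that the paper's statement also leaves implicit.
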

\begin{proof}
Let $\bF_k(x)=1-\prod_{\ell=1}^{k}(1-e^{-\lambda_\ell^\alpha x^\alpha})$ and $\bG_m(x)=1-\prod_{\ell=1}^{m}(1-e^{-\theta_\ell^\alpha x^\alpha})$ be the tail functions of $X_{k:k}$ and $Y_{m:m}$, respectively. Both tail functions are in the class $\mathcal{ERPV}(-\infty)$. The corresponding densities are represented as $f_k(x)=\alpha x^{\alpha-1}\left( \lambda_1^\alpha e^{-\lambda_1^\alpha x^\alpha}+P_k(x)\right)$ and $g_m(x)=\alpha x^{\alpha-1}\left( \theta_1^\alpha e^{-\theta_1^\alpha x^\alpha}+L_m(x)\right)$, where $P_j(x)=o(e^{-\lambda_1^\alpha x^\alpha})$ and $L_m(x)=o(e^{-\theta_1^\alpha x^\alpha})$. Choosing now $c=\frac{\lambda_1}{\theta_1}$, it follows that
$$
\lim_{x\rightarrow+\infty}\frac{f_k(x)}{cg_m(cx)}=\lim_{x\rightarrow+\infty}\frac{\lambda_1^\alpha e^{-\lambda_1^\alpha x^\alpha}+P_k(x)}{(c\theta_1)^\alpha e^{-(c\theta_1)^\alpha x^\alpha}+L_m(cx)}=1,
$$
hence $\bG_m^{-1}(\bF_k(x))$ is neither convex nor concave, thus, taking into account Corollary~\ref{cor:NON-Convexity}, $X_{k:k}$ and $Y_{m:m}$ are not comparable with respect to the convex transform order.
\end{proof}
A similar non-comparability result also holds for parallel systems with Gamma distributed components with integer shape parameters. We need the following auxiliary lemma to handle this case.
\begin{lemma}
Assume $X$ is $\Gamma(\alpha,1)$ where $\alpha\geq 2$ is an integer. Then the tail function of $X$ is $\bF(x)=e^{-x}\left(1+\sum_{\ell=1}^{\alpha-1}\frac{x^\ell}{\ell!} \right)$.
\end{lemma}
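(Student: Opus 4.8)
The plan is to verify directly that the stated expression is the tail function by checking the three defining properties of a survival function: it equals $1$ at the origin, tends to $0$ at infinity, and has derivative equal to the negative of the density. Since $\alpha$ is an integer, $\Gamma(\alpha)=(\alpha-1)!$, so the density of $X$ is $f(x)=\frac{x^{\alpha-1}e^{-x}}{(\alpha-1)!}$ for $x\geq 0$.

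First I would absorb the leading $1$ into the sum as its $\ell=0$ term and introduce the candidate $R(x)=e^{-x}\sum_{\ell=0}^{\alpha-1}\frac{x^\ell}{\ell!}$, which is exactly the claimed expression for $\bF$. Evaluating at the origin gives $R(0)=1$, since every term with $\ell\geq 1$ vanishes there. The limit $\lim_{x\rightarrow+\infty}R(x)=0$ follows because the exponential factor dominates each polynomial term.

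The main computation is the derivative. Writing $S(x)=\sum_{\ell=0}^{\alpha-1}\frac{x^\ell}{\ell!}$, the product rule gives $R'(x)=e^{-x}\bigl(S'(x)-S(x)\bigr)$. A shift of index shows $S'(x)=\sum_{j=0}^{\alpha-2}\frac{x^j}{j!}$, so the difference telescopes to $S(x)-S'(x)=\frac{x^{\alpha-1}}{(\alpha-1)!}$, whence $R'(x)=-e^{-x}\frac{x^{\alpha-1}}{(\alpha-1)!}=-f(x)$. Combining $R(0)=1$, $\lim_{x\rightarrow+\infty}R(x)=0$ and $R'(x)=-f(x)$, the fundamental theorem of calculus yields $R(x)=\int_x^{+\infty}f(t)\,dt=\bF(x)$, which is the desired identity.

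There is no serious obstacle here: the only step of note is the telescoping cancellation in $S(x)-S'(x)$, which is elementary. As an alternative one could argue by induction on $\alpha$, integrating $\int_x^\infty t^{\alpha-1}e^{-t}\,dt$ by parts to relate the tail of $\Gamma(\alpha,1)$ to that of $\Gamma(\alpha-1,1)$ and using the exponential tail as the base case; the direct verification above is more economical and I would adopt it.
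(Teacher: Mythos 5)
Your verification is correct, and in fact the paper offers no proof of this lemma at all --- it is stated as a known elementary fact and used immediately in the proof of Proposition~\ref{thm:GAMMA}, so there is nothing to compare against. Your computation is sound: with $S(x)=\sum_{\ell=0}^{\alpha-1}\frac{x^\ell}{\ell!}$ the telescoping $S(x)-S'(x)=\frac{x^{\alpha-1}}{(\alpha-1)!}$ gives $R'(x)=-f(x)$, and together with $\lim_{x\rightarrow+\infty}R(x)=0$ the fundamental theorem of calculus yields $R(x)=\int_x^{+\infty}f(t)\,dt=\bF(x)$. One small observation: the check $R(0)=1$ is not actually needed for the identity (only $R'=-f$ and the vanishing limit at infinity enter the argument); it serves as a consistency check that $f$ integrates to $1$. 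The inductive alternative you sketch, integrating $\int_x^{+\infty}t^{\alpha-1}e^{-t}\,dt$ by parts to reduce $\Gamma(\alpha,1)$ to $\Gamma(\alpha-1,1)$, is the classical textbook route and is equally valid; your direct differentiation is indeed the more economical of the two.
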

\begin{proposition}
\label{thm:GAMMA}
Let $X_1,\ldots,X_n$ be independent random variables with Gamma distributions with integer shape parameters $0<\alpha_1\leq\cdots\leq\alpha_n$ and scale parameters $0<\lambda_1<\lambda_2\leq\cdots\leq\lambda_n$. Analogously, let $Y_1,\ldots,Y_n$ be independent random variables with Gamma distributions with integer shape parameters $0<\beta_1\leq\cdots\leq\beta_n$ and scale parameters $0<\theta_1<\theta_2\leq\cdots\leq\theta_n$.
Given integers $m$ and $k$, if $\alpha_k=\beta_m$ and $\lambda_k=\theta_m$, then $X_{k:k}$ and $Y_{m:m}$ are not comparable with respect to the convex transform order.
\end{proposition}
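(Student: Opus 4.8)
The plan is to mirror the proof of Proposition~\ref{thm:WEIBULL}, applying the density-based criterion of Corollary~\ref{cor:NON-Convexity}, but with the extra bookkeeping forced by the fact that distinct Gamma components carry distinct polynomial factors. First I would fix the parametrization, reading $\lambda$ as the scale (so the component tail decays like $e^{-x/\lambda}$, which is what makes the stated condition involve the $k$-th and $m$-th parameters). The preceding lemma together with the scaling relation $\bF_{\Gamma(\alpha,\lambda)}(x)=\bF_{\Gamma(\alpha,1)}(x/\lambda)$ then gives, for each integer-shape component,
\[
\bF_{(\ell)}(x)=e^{-x/\lambda_\ell}\sum_{j=0}^{\alpha_\ell-1}\frac{1}{j!}\left(\frac{x}{\lambda_\ell}\right)^{j}\sim\frac{x^{\alpha_\ell-1}}{(\alpha_\ell-1)!\,\lambda_\ell^{\alpha_\ell-1}}\,e^{-x/\lambda_\ell},\qquad x\to+\infty,
\]
and density $f_{(\ell)}(x)=\dfrac{x^{\alpha_\ell-1}}{(\alpha_\ell-1)!\,\lambda_\ell^{\alpha_\ell}}e^{-x/\lambda_\ell}$.

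Next I would write the tail and density of the parallel system as $\bF_k(x)=1-\prod_{\ell=1}^{k}\bigl(1-\bF_{(\ell)}(x)\bigr)$ and $f_k(x)=\sum_{\ell=1}^{k}f_{(\ell)}(x)\prod_{\ell'\neq\ell}\bigl(1-\bF_{(\ell')}(x)\bigr)$, and identify the dominant term as $x\to+\infty$. Since each factor $1-\bF_{(\ell')}(x)\to1$ and each component tail decays at rate $e^{-x/\lambda_\ell}$, the slowest contribution comes from the component of largest scale; among $X_1,\dots,X_k$ this is the $k$-th, so $f_k(x)\sim f_{(k)}(x)$, and analogously $g_m(x)\sim g_{(m)}(x)$. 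Invoking the hypotheses $\alpha_k=\beta_m$ and $\lambda_k=\theta_m$, the two dominant components are identically distributed, so, taking $c=1$,
\[
\lim_{x\to+\infty}\frac{f_k(x)}{g_m(x)}=\lim_{x\to+\infty}\frac{f_{(k)}(x)}{g_{(m)}(x)}=1.
\]

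It then remains to check $\bF_k,\bG_m\in\mathcal{ERPV}(-\infty)$ and to invoke Corollary~\ref{cor:NON-Convexity} (equivalently Theorem~\ref{NON-Convexity}, followed by the third case of Theorem~\ref{CONV_Characterization}, noting that $\bG_m^{-1}(\bF_k(x))$ is manifestly not linear). Membership in $\mathcal{ERPV}(-\infty)$ I would read off from the asymptotic $\bF_k(x)\sim\mathrm{const}\cdot x^{\alpha_k-1}e^{-x/\lambda_k}$: for any $\phi$ with $\phi(x)/x\to1$ one has $\bF_k(\phi(x))/\bF_k(x)\sim(\phi(x)/x)^{\alpha_k-1}e^{-(\phi(x)-x)/\lambda_k}$, whose polynomial factor tends to $1$ while the exponential factor fails to tend to $1$ exactly when $\phi(x)-x\not\to0$, which is precisely the defining property of the class.

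The hard part will be the \emph{exact} matching of the leading coefficients, not merely of the exponential rate $1/\lambda_k=1/\theta_m$ and of the polynomial degree $\alpha_k-1=\beta_m-1$. Theorem~\ref{NON-Convexity} delivers $\bG_m^{-1}(\bF_k(x))-cx\to0$ only when the tail ratio tends to $1$; were the leading constants different, $\bG_m^{-1}(\bF_k(x))-x$ would converge to a nonzero constant and Theorem~\ref{CONV_Characterization} would rule out only one of convexity or concavity, yielding a one-sided order rather than non-comparability. This is precisely why both hypotheses are needed: together they force the dominant components to coincide in law, so the leading coefficients agree and the asymptotic offset is $0$. The one delicate point is that the largest scale need not be attained by a single component (ties in scale among $X_1,\dots,X_k$), in which case the leading coefficient acquires the multiplicity of the components sharing both the top scale and the top shape; the argument requires these multiplicities to agree on the two sides, which is automatic in the generic case of a unique largest-scale component (e.g.\ when $\lambda_{k-1}<\lambda_k$ and $\theta_{m-1}<\theta_m$), and the matching condition then does the rest.
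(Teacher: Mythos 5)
Your machinery is exactly the paper's: mirror Proposition~\ref{thm:WEIBULL} by showing the density ratio tends to $1$ for a suitable $c>0$, checking $\bF_k,\bG_m\in\mathcal{ERPV}(-\infty)$, and invoking Corollary~\ref{cor:NON-Convexity} (equivalently Theorem~\ref{NON-Convexity} followed by the third case of Theorem~\ref{CONV_Characterization}). Where you diverge is in the reading of the parameters, and the divergence is instructive. The paper's own proof writes the component tails as $e^{-\lambda_\ell x}P_{\alpha_\ell,\lambda_\ell}(x)$ with $P_{a,b}(x)=1+\sum_{j=1}^{a-1}\frac{(bx)^j}{j!}$, i.e.\ it treats $\lambda_\ell$ as a rate; under that reading the dominant term of $\bF_k$ comes from the $\ell=1$ component, whose uniqueness is precisely what the strict inequality $\lambda_1<\lambda_2$ buys, and the matching that produces $\lim f_k/g_m=1$ would have to be at the \emph{first} components, so the stated hypothesis $(\alpha_k,\lambda_k)=(\beta_m,\theta_m)$ sits awkwardly with the paper's own computation. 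You instead adopt the scale reading (tails $\sim \mathrm{const}\cdot x^{\alpha_\ell-1}e^{-x/\lambda_\ell}$), which is the reading under which the stated $(k,m)$-hypothesis is the natural one, but then dominance sits at the \emph{top} scale, where the stated ordering permits ties; your closing caveat about multiplicities is therefore a real issue with the proposition as stated under your reading, not a defect of your argument --- the paper's ``one may now easily verify\ldots under assumptions on the parameters'' glosses over exactly this tension. One inaccuracy in your commentary: the claim that both hypotheses are needed. Theorem~\ref{NON-Convexity} allows an arbitrary $c>0$, and taking $c=\theta_m/\lambda_k$ (just as the paper takes $c=\lambda_1/\theta_1$ in Theorem~\ref{thm:gen-kochar} and Proposition~\ref{thm:WEIBULL}) makes all scale-dependent constants cancel, so only the degree matching $\alpha_k=\beta_m$ together with equality of the top multiplicities is essential; the condition $\lambda_k=\theta_m$ is needed only for your choice $c=1$. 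This does not affect the validity of your proof of the proposition as stated, but your diagnosis of why the asymptotic offset vanishes is an artefact of fixing $c=1$ rather than intrinsic to the statement. A last small point shared with the paper: Corollary~\ref{cor:NON-Convexity} hypothesizes a density ratio while Theorem~\ref{NON-Convexity} needs the tail ratio, and the step $f(x)\sim c\,g(cx)\Rightarrow \bF(x)\sim\bG(cx)$ (integrating the asymptotic equivalence over $[x,\infty)$) is silently assumed by both of you, which is acceptable at the paper's level of rigor.
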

\begin{proof}
After identifying the distribution functions of $X_{k:k}$ and $Y_{m:m}$, we may proceed as for the proof of Proposition~\ref{thm:WEIBULL}. The distribution functions of $X_{k:k}$ is given by $F_k(x)=\prod_{\ell=1}^{k}\left(1-e^{-\lambda_\ell x}P_{\alpha_\ell,\lambda_\ell}(x)\right)$, while the distribution function of $Y_{m:m}$ is $G_m(x)=\prod_{\ell=1}^{m}\left(1-e^{-\theta_\ell x}P_{\beta_\ell,\theta_\ell}(x)\right)$,
where $P_{a,b}(x)=1+\sum_{\ell=1}^{a-1}\frac{b^\ell x^\ell}{\ell!}$. One may now easily verify that both tail functions are in the class $\mathcal{ERPV}(-\infty)$ and that, under assumptions on the parameters, the corresponding density functions satisfy $\lim_{x\rightarrow+\infty}\frac{f_k(x)}{g_m(x)}=1$, so, taking into account Corollary~\ref{cor:NON-Convexity}, the conclusion follows.
\end{proof}

\begin{proposition}
\label{prop:gle1}
Let $X_i\sim GE(\alpha_i, \lambda_i)$, $i=1,\ldots,n$, be independent with parameters $\alpha_i,\lambda_i>0$ and $Y_i\sim GE(\beta_i, \theta_i)$, $i=1,\ldots,n$, be independent random variables with parameters $\beta_i,\theta_i>0$. Assume, without loss of generality, that both families of parameters are ordered increasingly.
Given integers $m,k\leq n$, if $\prod_{\ell=1}^k\alpha_\ell=\prod_{\ell=1}^m\beta_\ell$, then the random variables $X_{1:k}$ and $Y_{1:m}$ are not comparable with respect to convex transform order.
\end{proposition}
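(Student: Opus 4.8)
The plan is to mirror the strategy already used in the proofs of Proposition~\ref{thm:WEIBULL} and Proposition~\ref{thm:GAMMA}, reducing the whole argument to a single application of Corollary~\ref{cor:NON-Convexity}. Since $X_{1:k}$ and $Y_{1:m}$ are minima, their tail functions factor as products of the individual generalized exponential tails,
$$
\bF_k(x)=\prod_{\ell=1}^k\bigl(1-(1-e^{-\lambda_\ell x})^{\alpha_\ell}\bigr),\qquad \bG_m(x)=\prod_{\ell=1}^m\bigl(1-(1-e^{-\theta_\ell x})^{\beta_\ell}\bigr).
$$
First I would record the elementary asymptotic $1-(1-e^{-\lambda x})^\alpha\sim\alpha e^{-\lambda x}$ as $x\to+\infty$, obtained from the first-order expansion of $(1-u)^\alpha$ near $u=0$. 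Multiplying these out gives $\bF_k(x)\sim A\,e^{-\Lambda_k x}$ and $\bG_m(x)\sim A\,e^{-\Theta_m x}$, where $A=\prod_{\ell=1}^k\alpha_\ell=\prod_{\ell=1}^m\beta_\ell$ is the common value fixed by the hypothesis, $\Lambda_k=\sum_{\ell=1}^k\lambda_\ell$ and $\Theta_m=\sum_{\ell=1}^m\theta_\ell$. From this purely exponential leading behaviour it follows, exactly as asserted in Proposition~\ref{thm:WEIBULL}, that both tails lie in $\mathcal{ERPV}(-\infty)$.

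Next I would pass to the densities. Using $f_{X_j}(x)=\alpha_j\lambda_j e^{-\lambda_j x}(1-e^{-\lambda_j x})^{\alpha_j-1}\sim\alpha_j\lambda_j e^{-\lambda_j x}$, the $j$-th summand of $f_k(x)=\sum_{j=1}^k f_{X_j}(x)\prod_{\ell\neq j}\bF_{X_\ell}(x)$ behaves like $A\lambda_j e^{-\Lambda_k x}$, whence $f_k(x)\sim A\,\Lambda_k e^{-\Lambda_k x}$ and, likewise, $g_m(x)\sim A\,\Theta_m e^{-\Theta_m x}$. Choosing the scale $c=\Lambda_k/\Theta_m$ makes the two exponentials cancel in the quotient $\frac{f_k(x)}{c\,g_m(cx)}$, and the remaining constant prefactor collapses to $\frac{A\Lambda_k}{c\,A\Theta_m}=\frac{\Lambda_k}{c\,\Theta_m}=1$. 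Hence $\lim_{x\to+\infty}\frac{f_k(x)}{c\,g_m(cx)}=1$, and Corollary~\ref{cor:NON-Convexity} yields that $X_{1:k}$ and $Y_{1:m}$ are not comparable with respect to the convex transform order.

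The role of the hypothesis $\prod_{\ell=1}^k\alpha_\ell=\prod_{\ell=1}^m\beta_\ell$ is precisely to force the leading coefficients of the two tails, and of the two densities, to coincide, which is what reduces the constant prefactor to $1$ once the exponentials have been balanced by the choice of $c$; without it the density ratio would tend to some value different from $1$ and the argument would break down. The main obstacle I anticipate is not the limit computation itself but the bookkeeping needed to justify that the remainder terms coming from each factor $(1-e^{-\lambda_\ell x})^{\alpha_\ell}$ and from the sum defining $f_k$ are genuinely negligible against the leading exponential, and, secondarily, confirming membership in $\mathcal{ERPV}(-\infty)$ rather than merely in the weaker class, since Remark~\ref{remark_classes} shows the latter is insufficient. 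Both points are routine given the exponential structure but are where care is required; as in Theorem~\ref{thm:gen-kochar}, I would also observe that $\bG_m^{-1}(\bF_k(x))$ is not linear, so the exceptional linear alternative in Theorem~\ref{CONV_Characterization} does not arise.
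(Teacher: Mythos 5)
Your proof is correct, and its core coincides with the paper's: the same product factorization of the tails of the minima, the same first-order expansion $1-(1-e^{-\lambda x})^{\alpha}\sim \alpha e^{-\lambda x}$, the same choice $c=\bigl(\sum_{j=1}^k\lambda_j\bigr)/\bigl(\sum_{j=1}^m\theta_j\bigr)$, and the same observation that the hypothesis $\prod_{\ell=1}^k\alpha_\ell=\prod_{\ell=1}^m\beta_\ell$ is precisely what makes the constant prefactors cancel. The one genuine difference is the bridge into the non-comparability machinery: the paper stays at the level of tail functions, verifying directly that $\lim_{x\rightarrow+\infty}\bF_k(x)/\bG_m(cx)=1$ and invoking Theorem~\ref{NON-Convexity} together with Theorem~\ref{CONV_Characterization}, whereas you pass to the densities and apply Corollary~\ref{cor:NON-Convexity}. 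Since here, unlike in Propositions~\ref{thm:WEIBULL} and~\ref{thm:GAMMA} (parallel systems, where the densities were the more tractable object), the tails of the series lifetimes $X_{1:k}$ and $Y_{1:m}$ are explicit products with transparent leading exponential behaviour, the tail route is shorter: your detour through $f_k(x)=\sum_{j=1}^k f_{X_j}(x)\prod_{\ell\neq j}\bF_{X_\ell}(x)$ adds bookkeeping (the summand asymptotics $A\lambda_j e^{-\Lambda_k x}$) that the paper's computation avoids entirely, while costing nothing in rigour or generality, as Corollary~\ref{cor:NON-Convexity} is itself deduced from Theorem~\ref{NON-Convexity}. Your attention to the two caveats --- membership in $\mathcal{ERPV}(-\infty)$ rather than merely $\mathcal{RPV}(-\infty)$, which Remark~\ref{remark_classes} shows is essential, and the exclusion of the linear case in Theorem~\ref{CONV_Characterization} --- is exactly right; the paper asserts the first without detail as well, and your non-linearity observation mirrors the one made in Theorem~\ref{thm:gen-kochar}.
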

\begin{proof}
The tail functions of $X_{1:k}$ and $Y_{1:m}$ are $\bF_k(x)=\prod_{\ell=1}^{k}(1-(1-e^{-\lambda_\ell x})^{\alpha_\ell})$ and $\bG_m(x)=\prod_{\ell=1}^{m}(1-(1-e^{-\theta_\ell x})^{\beta_\ell})$, respectively. Expanding and taking into account that the parameters are ordered increasingly, it follows that for $x$ large enough
$\bF_k(x)\simeq\prod_{\ell=1}^{k}\alpha_\ell e^{-\sum_{j=1}^{k}\lambda_jx}$ and $\bG_m(x)\simeq\prod_{\ell=1}^{m}\beta_\ell e^{-\sum_{j=1}^{m}\theta_jx}$. Therefore, $\bF_k,\bG_m\in\mathcal{ERPV}(-\infty)$, so, choosing $c=\frac{\sum_{j=1}^k\lambda_j}{\sum_{j=1}^m\theta_j}$
$$
\lim_{x\rightarrow+\infty}\frac{\bF_k(x)}{\bG_m(cx)}=\frac{\prod_{\ell=1}^{k}\alpha_\ell}{\prod_{\ell=1}^{m}\beta_\ell}=1,
$$	
consequently, taking into account Theorems~\ref{NON-Convexity} and \ref{CONV_Characterization}, the conclusion follows.
\end{proof}
\begin{proposition}
Take $X_1,\ldots,X_n$ and $Y_1,\ldots,Y_n$ 
as in Proposition~\ref{prop:gle1}.
If $\alpha_1=\beta_1$, then the variables $X_{k:k}$ and $Y_{m:m}$ are not comparable with respect to convex transform order.
\end{proposition}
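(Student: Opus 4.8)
The plan is to mirror the strategy of Proposition~\ref{thm:WEIBULL} and Theorem~\ref{thm:gen-kochar}, the only genuine difference being that we now deal with a \emph{maximum} of generalized exponential variables, so the relevant tail is $1$ minus a product rather than a product. First I would write the tail functions explicitly. Since $X_{k:k}=\max(X_1,\ldots,X_k)$ has distribution function $\prod_{\ell=1}^{k}(1-e^{-\lambda_\ell x})^{\alpha_\ell}$, we have
$$
\bF_k(x)=1-\prod_{\ell=1}^{k}(1-e^{-\lambda_\ell x})^{\alpha_\ell},
\qquad
\bG_m(x)=1-\prod_{\ell=1}^{m}(1-e^{-\theta_\ell x})^{\beta_\ell}.
$$

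The key step is to extract the leading asymptotics of these tails. Using the expansion $(1-e^{-\lambda_\ell x})^{\alpha_\ell}=1-\alpha_\ell e^{-\lambda_\ell x}+o(e^{-\lambda_\ell x})$ for each factor and multiplying out, I would obtain $\bF_k(x)=\sum_{\ell=1}^{k}\alpha_\ell e^{-\lambda_\ell x}+o(e^{-\lambda_1 x})$, the discarded terms (cross products $e^{-(\lambda_i+\lambda_j)x}$ and second-order Taylor corrections $e^{-2\lambda_\ell x}$) all decaying strictly faster than $e^{-\lambda_1 x}$. Because the scale parameters satisfy $\lambda_1<\lambda_2\leq\cdots$ \emph{strictly} at the first index, the slowest-decaying summand dominates, giving $\bF_k(x)\simeq\alpha_1 e^{-\lambda_1 x}$, and analogously $\bG_m(x)\simeq\beta_1 e^{-\theta_1 x}$. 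This is where the hypotheses are genuinely used, and it is the main point to get right: the whole comparison collapses onto the single component carrying the smallest scale parameter, so the behaviour of the maximum is governed by the shape $\alpha_1$ and scale $\lambda_1$ of that slowest component alone. I expect this isolation of the dominant exponential (and the verification that all remaining contributions are $o(e^{-\lambda_1 x})$) to be the only delicate part; everything downstream is routine.

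Once the dominant term is isolated, the rest follows the established pattern. Both $\bF_k$ and $\bG_m$ are asymptotically equivalent to constant multiples of $e^{-\lambda_1 x}$ and $e^{-\theta_1 x}$, which lie in $\mathcal{ERPV}(-\infty)$, and I would check membership exactly as in Proposition~\ref{thm:WEIBULL}. Then I would choose $c=\frac{\lambda_1}{\theta_1}$ so that, invoking the hypothesis $\alpha_1=\beta_1$,
$$
\lim_{x\rightarrow+\infty}\frac{\bF_k(x)}{\bG_m(cx)}
=\lim_{x\rightarrow+\infty}\frac{\alpha_1 e^{-\lambda_1 x}}{\beta_1 e^{-(c\theta_1)x}}
=\frac{\alpha_1}{\beta_1}=1.
$$
Finally, Theorem~\ref{NON-Convexity} yields $\lim_{x\rightarrow+\infty}\bigl(\bG_m^{-1}(\bF_k(x))-cx\bigr)=0$, and since $\bG_m^{-1}(\bF_k(x))$ is plainly not linear, Theorem~\ref{CONV_Characterization} (with slope $b=c$ and vanishing constant term, i.e.\ its case $c=0$) shows it is neither convex nor concave. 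Hence $X_{k:k}$ and $Y_{m:m}$ are not comparable with respect to the convex transform order.
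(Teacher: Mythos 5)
Your proposal is correct and follows essentially the same route as the paper, whose own (much terser) proof just records the expansions $\bF_k(x)=\alpha_1e^{-\lambda_1 x}+o(e^{-\lambda_1 x})$ and $\bG_m(x)=\beta_1e^{-\theta_1 x}+o(e^{-\theta_1 x})$, chooses $c=\frac{\lambda_1}{\theta_1}$, and leaves the appeal to Theorems~\ref{NON-Convexity} and \ref{CONV_Characterization} implicit. You simply make explicit what the paper leaves unsaid: the multiplying-out of the product with control of the discarded $o(e^{-\lambda_1 x})$ terms (correctly noting that strictness of $\lambda_1<\lambda_2$ is what isolates the dominant exponential), the $\mathcal{ERPV}(-\infty)$ membership, and the final non-linearity step.
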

\begin{proof}
Just remark that, as $x\longrightarrow+\infty$,
$\bF_k(x)=1- \prod_{\ell=1}^{k}(1-\alpha_\ell e^{-\lambda_\ell x})=\alpha_1e^{-\lambda_1 x}+o(e^{-\lambda_1 x})$ and $\bG_m(x)=1- \prod_{\ell_1}^m(1-\beta_\ell e^{-\theta_\ell x})=\beta_1e^{-\theta_1 x}+o(e^{-\theta_1 x})$ and choose $c=\frac{\lambda_1}{\theta_1}$.
\end{proof}

Finally, we go back to the case of exponential components, but consider more complex models: first we look at the lifetime of a series system whose components are themselves  parallel systems, and in the second model we consider a case where one of the vectors that are involved has an F-G-M joint distribution (see for example Kotz et al. \cite{KBJ00}).

\begin{proposition}
Let $X_1,\ldots,X_n$ 
be independent exponentially distributed random variables with hazard rates $\lambda_i>0$, while $Y_1,\ldots,Y_n$ 
are independent exponentially distributed random variables with hazard rates $\theta_j>0$. Let $I_1,I_2,I_3,I_4\subset\{1,\ldots,n\}$ be sets of integers and define $X_{(n_1)} = \max(X_{i}, i\in I_1)$, $X_{(n_2)} = \max(X_{j}, j\in I_2)$, $Y_{(m_1)} = \max(Y_{i}, i\in I_3)$, $Y_{(m_2)} = \max(X_{j}, j\in I_4)$, and $W=\min(X_{(n_1)},X_{(n_2)})$, $Z=\min(Y_{(m_1)},Y_{(m_2)})$. Let $N$ be the number of occurrences of $\lambda=\min(\lambda_i+\lambda_j,i\in I_1,j\in I_2)$ and $M$ be the number of occurrences of $\theta=\min(\theta_i+\theta_j,i\in I_3,j\in I_4)$. If $N=M$ then the variables $W$ and $Z$ are not comparable with respect to the convex transform order.
\end{proposition}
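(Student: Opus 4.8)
The plan is to reduce the statement to an application of Theorem~\ref{NON-Convexity} together with Theorem~\ref{CONV_Characterization}, exactly in the spirit of Propositions~\ref{thm:WEIBULL} and~\ref{thm:GAMMA}. The only genuinely new ingredient is the computation of the tail functions of the series-of-parallel systems $W$ and $Z$ and the identification of their leading asymptotic behaviour. I would assume throughout that the index sets defining each series system are disjoint, so that the two parallel subsystems are independent; this is what makes the quantity $\lambda=\min(\lambda_i+\lambda_j)$ meaningful.

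First I would write down the tail functions. Since the tail of a minimum of independent variables factorizes,
$$
\overline{F}_W(x)=\Bigl(1-\prod_{i\in I_1}(1-e^{-\lambda_i x})\Bigr)\Bigl(1-\prod_{j\in I_2}(1-e^{-\lambda_j x})\Bigr),
$$
and analogously for $\overline{G}_Z$. Expanding each factor for large $x$, one has $1-\prod_{i\in I_1}(1-e^{-\lambda_i x})=\sum_{i\in I_1}e^{-\lambda_i x}+o(e^{-\mu_1 x})$, whose dominant term is $p_1e^{-\mu_1 x}$, where $\mu_1=\min_{i\in I_1}\lambda_i$ and $p_1$ is its multiplicity. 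Multiplying the two factors yields $\overline{F}_W(x)=N e^{-\lambda x}(1+o(1))$, where $\lambda=\mu_1+\mu_2=\min(\lambda_i+\lambda_j)$ and $N=p_1p_2$ is precisely the number of pairs $(i,j)$ realising this minimum. Likewise $\overline{G}_Z(x)=M e^{-\theta x}(1+o(1))$.

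The second step is to verify that both tails belong to $\mathcal{ERPV}(-\infty)$, which I would do as in the proof of Proposition~\ref{thm:WEIBULL}: the reciprocal of a function asymptotic to $Ne^{-\lambda x}$ behaves like $\frac1N e^{\lambda x}$, which is rapidly varying of index $+\infty$, and the exponential factor makes the defining implication of $\mathcal{ERPV}(+\infty)$ hold, the lower-order exponential corrections being negligible under any perturbation $\phi(x)=x+o(x)$. With this in hand I would set $c=\lambda/\theta$ and compute
$$
\lim_{x\rightarrow+\infty}\frac{\overline{F}_W(x)}{\overline{G}_Z(cx)}=\lim_{x\rightarrow+\infty}\frac{N e^{-\lambda x}}{M e^{-\theta c x}}=\frac{N}{M}=1,
$$
using $N=M$. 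Theorem~\ref{NON-Convexity} then gives $\lim_{x\rightarrow+\infty}\bigl(\overline{G}_Z^{-1}(\overline{F}_W(x))-cx\bigr)=0$, and since $\overline{G}_Z^{-1}(\overline{F}_W(x))$ is not linear, Theorem~\ref{CONV_Characterization} yields that it is neither convex nor concave, that is, $W$ and $Z$ are not comparable in the convex transform order.

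The step I expect to be the main obstacle is the bookkeeping in the second paragraph: correctly extracting the leading exponential term of the minimum of two parallel systems and matching its rate $\lambda$ and multiplicity $N$ to the quantities defined in the statement. The factorization is clean only when the index sets are disjoint; if overlaps were allowed the rate would no longer equal $\min(\lambda_i+\lambda_j)$ (already $I_1=I_2=\{1\}$ gives rate $\lambda_1$, not $2\lambda_1$), so I would make the independence of the two subsystems explicit. Once the asymptotics $\overline{F}_W(x)\sim Ne^{-\lambda x}$ and $\overline{G}_Z(x)\sim Me^{-\theta x}$ are in place, the remainder is a direct transcription of the arguments used for Propositions~\ref{thm:WEIBULL} and~\ref{thm:GAMMA}.
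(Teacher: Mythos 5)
Your proposal is correct and follows essentially the same route as the paper's own proof: factorize the tails of $W$ and $Z$ as products of the two parallel subsystem tails, extract the leading terms $Ne^{-\lambda x}$ and $Me^{-\theta x}$, choose $c=\lambda/\theta$, and conclude via Theorems~\ref{NON-Convexity} and~\ref{CONV_Characterization}. Your added remarks --- making the disjointness of the index sets explicit (the paper uses this only implicitly through the factorization of $\overline{F}_W$) and verifying that $N$ equals the product of the multiplicities of the two separate minima --- are sound clarifications of the same argument rather than a different approach.
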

\begin{proof}
We have the following representations for the tail functions:
$$
\bF_W(x) =\left(1-\prod_{i\in I_1}(1-e^{-\lambda
_i x})\right) \left(1-\prod_{i\in I_2}(1-e^{-\lambda
_i x})\right)=Ne^{-\lambda x}+o(e^{-\lambda x})
$$
and
$$
\bF_Z(x) =\left(1-\prod_{i\in I_3}(1-e^{-\theta
_i x})\right) \left(1-\prod_{i\in I_4}(1-e^{-\theta
_i x})\right)=Me^{-\theta x}+o(e^{-\theta x}).
$$
Both tail functions are of class $\mathcal{ERPV}(-\infty)$, so, taking $c=\frac\lambda\theta$, it follows that
$$
\lim_{x\rightarrow+\infty}\frac{\bF_W(x)}{\bF_Z(cx)}=\lim_{x\rightarrow+\infty}\frac{Ne^{-\lambda x}}{Me^{-\theta cx}}=\frac{N}{M}=1,
$$
so, taking into account Theorems~\ref{NON-Convexity} and \ref{CONV_Characterization}, the conclusion follows.
\end{proof}

\begin{proposition}
	Let $X_1,\ldots, X_n$ be independent exponentially distributed random variables with hazard rates $0<\lambda_1\leq\cdots\leq\lambda_n$ and $Y_1,\ldots,Y_n$ be exponentially distributed random variables with hazard rates $0<\theta_1\leq\cdots\leq\theta_n$ such that their joint distribution is described by the F-G-M system
	\[
	F_{Y_1,\ldots,Y_n}(x_1,\ldots,x_n) = \prod_{i=1}^{n}(1-e^{-\theta_ix_i})\left(1+\sum_{1\leq i<j\leq n}c_{ij}e^{-(\theta_ix_i+\theta_jx_j)}\right)
	\]
where $\sum_{1\leq i<j\leq k}\vert c_{ij}\vert\leq 1$.
Assuming that the number of occurrences of $\lambda_1$ and $\theta_1$ is the same, then for any given integers $m,k\leq n$, the random variables $X_{k:k}$ and $Y_{m:m}$ are not comparable with respect to the convex transform order.
\end{proposition}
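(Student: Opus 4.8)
The plan is to follow the template of Propositions~\ref{thm:WEIBULL} and \ref{thm:GAMMA} (and the final step of Theorem~\ref{thm:gen-kochar}): reduce everything to the leading exponential term of the two tail functions and then invoke Theorems~\ref{NON-Convexity} and \ref{CONV_Characterization}. The only genuinely new ingredient is that $Y_{m:m}$ comes from an F-G-M dependent vector, so the first task is to identify the law of $Y_{m:m}$ and to check that the dependence leaves the tail asymptotics untouched.

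First I would determine the distribution of $Y_{m:m}=\max(Y_1,\ldots,Y_m)$. Marginalizing the F-G-M joint distribution over $Y_{m+1},\ldots,Y_n$ (letting those arguments tend to $+\infty$) preserves the F-G-M form while deleting every term $c_{ij}$ with $j>m$; setting the surviving arguments equal to $x$ gives
$$
F_{Y_{m:m}}(x)=\prod_{i=1}^{m}(1-e^{-\theta_i x})\Bigl(1+\sum_{1\le i<j\le m}c_{ij}e^{-(\theta_i+\theta_j)x}\Bigr).
$$
Expanding, $\prod_{i=1}^{m}(1-e^{-\theta_i x})=1-Me^{-\theta_1 x}+o(e^{-\theta_1 x})$, where $M$ is the number of indices $\ell\le m$ with $\theta_\ell=\theta_1$; and since every interaction exponent satisfies $\theta_i+\theta_j\ge\theta_1+\theta_2\ge 2\theta_1>\theta_1$, the whole correction factor equals $1+O(e^{-(\theta_1+\theta_2)x})=1+o(e^{-\theta_1 x})$. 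Hence $\bG_m(x)=\bF_{Y_{m:m}}(x)=Me^{-\theta_1 x}+o(e^{-\theta_1 x})$, exactly as in the independent case. In the same way $\bF_k(x)=\bF_{X_{k:k}}(x)=1-\prod_{\ell=1}^{k}(1-e^{-\lambda_\ell x})=Ne^{-\lambda_1 x}+o(e^{-\lambda_1 x})$, where $N$ is the number of indices $\ell\le k$ with $\lambda_\ell=\lambda_1$; the stated hypothesis is precisely $N=M$.

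Next I would record that both tails are finite signed combinations of exponentials with smallest exponent $\lambda_1$, respectively $\theta_1$, so both lie in $\mathcal{ERPV}(-\infty)$, verified just as in Propositions~\ref{thm:WEIBULL} and \ref{thm:GAMMA}. Using $N=M$ and choosing $c=\frac{\lambda_1}{\theta_1}$ yields
$$
\lim_{x\rightarrow+\infty}\frac{\bF_k(x)}{\bG_m(cx)}=\lim_{x\rightarrow+\infty}\frac{Ne^{-\lambda_1 x}}{Me^{-\theta_1 c x}}=\frac{N}{M}=1 .
$$
By Theorem~\ref{NON-Convexity} this forces $\bG_m^{-1}(\bF_k(x))-cx\rightarrow 0$, and since $\bG_m^{-1}(\bF_k(x))$ is not linear, Theorem~\ref{CONV_Characterization} shows it is neither convex nor concave. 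As this rules out simultaneously $X_{k:k}\leq_{\IFR}Y_{m:m}$ and $Y_{m:m}\leq_{\IFR}X_{k:k}$, the two variables are not comparable with respect to the convex transform order.

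I expect the main obstacle to be the second paragraph, namely verifying that the F-G-M coupling is asymptotically invisible. The decisive quantitative point is that each interaction term decays at rate at least $\theta_1+\theta_2$, strictly faster than the marginal rate $\theta_1$, so the admissibility bound $\sum_{1\le i<j\le n}|c_{ij}|\le 1$ is needed only to guarantee that $F_{Y_1,\ldots,Y_n}$ is a bona fide distribution function and plays no role whatsoever in the leading-order asymptotics. Once this is in place, the membership in $\mathcal{ERPV}(-\infty)$ and the final application of Theorems~\ref{NON-Convexity} and \ref{CONV_Characterization} are the by-now routine steps.
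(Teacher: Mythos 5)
Your proposal is correct and follows essentially the same route as the paper's proof: identify $\bF_k(x)=Ne^{-\lambda_1x}+o(e^{-\lambda_1x})$ and $\bG_m(x)=Ne^{-\theta_1x}+o(e^{-\theta_1x})$, note both tails are in $\mathcal{ERPV}(-\infty)$, choose $c=\frac{\lambda_1}{\theta_1}$ so that $\bF_k(x)/\bG_m(cx)\rightarrow1$, and conclude via Theorems~\ref{NON-Convexity} and \ref{CONV_Characterization}. The only difference is that you spell out two steps the paper merely asserts --- the marginalization of the F-G-M distribution giving the form of $G_m$, and the observation that the interaction terms decay at rate at least $\theta_1+\theta_2>\theta_1$ and are therefore asymptotically negligible --- which is a welcome tightening rather than a different argument.
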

\begin{proof}
Observe that the distribution function $F_k(x)$ of $X_{k:k}$ is given by $F_k(x) = \prod_{i=1}^{k}(1-e^{-\lambda_i x})$, while the distribution function $G_m(x)$ of $Y_{m:m}$ is of the form $G_m(x) = \prod_{i=1}^{m}(1-e^{-\theta_ix})\left(1+\sum_{1\leq i<j\leq m}c_{ij}e^{-(\theta_i+\theta_j)x}\right)$. The corresponding survival functions can be written as
$\bF_k(x) = Ne^{-\lambda_1 x}+ o(e^{-\lambda_1 x})$ and $\bG_m(x) = Ne^{-\theta_1 x}+ o(e^{-\theta_1 x})$ and $N$ is the number of occurrences of $\lambda_1$ and $\theta_1$. Then, by choosing $c = \frac{\lambda_1}{\theta_1}$, we have that as $x\longrightarrow\infty$, the ratio $\frac{\bF_k(x)}{\bG_m(cx)}$ tends to 1, proving the non-comparability of the two random variables with respect to the convex transform order.
\end{proof}

Just for the sake of completeness, we also provide the result described above for the case of Weibull distributions.

\begin{proposition}
Let $X_1,\ldots, X_n$ be independent random variables with Weibull distributions with shape parameters $\alpha_1\leq \cdots\leq \alpha_n$ and hazard rates $\lambda_1,\ldots,\lambda_n$. Similarly, let $Y_1,\ldots,Y_n$ be Weibull distributed random variables with shape parameters $\beta_1\leq \cdots\leq \beta_n$ and hazard rates $\theta_1,\ldots,\theta_n$ such that their joint distribution is described by
	\[
	F_{Y_1,\ldots,Y_n}(x_1,\ldots,x_n) = \prod_{i=1}^{n}(1-e^{-\theta_i^{\beta_i}x_i^{\beta_i}})\left(1+\sum_{1\leq i<j\leq n}c_{ij}e^{-(\theta_i^{\beta_i}x_i^{\beta_i}+\theta_j^{\beta_j}x_j^{\beta_j})}\right)
	\]
where $\sum_{1\leq i<j\leq k}|c_{ij}|\leq 1$.
Assume that $\alpha_1 = \beta_1$ and the number of occurrences of $\alpha_1$ and $\beta_1$ is the same. Then for any given integers $m,k\leq n$ the random variables $X_{k:k}$ and $Y_{m:m}$ are not comparable with respect to convex transform order.	
\end{proposition}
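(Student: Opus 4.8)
The plan is to follow the template of the preceding proofs in this section: identify the two tail functions, extract their leading asymptotic behaviour, and then invoke Theorems~\ref{NON-Convexity} and~\ref{CONV_Characterization}. First I would write down the distribution functions. Since the $X_i$ are independent, the distribution function of $X_{k:k}$ is $F_k(x)=\prod_{i=1}^{k}(1-e^{-\lambda_i^{\alpha_i}x^{\alpha_i}})$. For $Y_{m:m}=\max(Y_1,\ldots,Y_m)$ the distribution function is obtained by evaluating the F-G-M joint distribution on the diagonal, after marginalising the coordinates $m+1,\ldots,n$ (which sends the corresponding factors to $1$ and the cross-terms carrying those indices to $0$, so the F-G-M form is preserved on the indices $1,\ldots,m$), yielding
$$
G_m(x)=\prod_{i=1}^{m}(1-e^{-\theta_i^{\beta_i}x^{\beta_i}})\Bigl(1+\sum_{1\leq i<j\leq m}c_{ij}e^{-(\theta_i^{\beta_i}x^{\beta_i}+\theta_j^{\beta_j}x^{\beta_j})}\Bigr).
$$

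Next I would analyse the tails $\bF_k=1-F_k$ and $\bG_m=1-G_m$ as $x\to+\infty$. Expanding the products, each tail is, to leading order, a sum of single exponentials $e^{-\lambda_i^{\alpha_i}x^{\alpha_i}}$ (respectively $e^{-\theta_i^{\beta_i}x^{\beta_i}}$), whereas the F-G-M correction contributes only \emph{products} of two exponentials, which decay strictly faster and are therefore asymptotically negligible. Because the shape parameters are ordered and $\alpha_1=\beta_1$ is the smallest exponent on each side, the slowest-decaying contribution is governed by the components of shape $\alpha_1$; the hypothesis that $\alpha_1$ and $\beta_1$ occur the same number of times, say $N$, makes the leading coefficients coincide, so that $\bF_k(x)=Ne^{-\mu x^{\alpha_1}}+o(e^{-\mu x^{\alpha_1}})$ and $\bG_m(x)=Ne^{-\nu x^{\alpha_1}}+o(e^{-\nu x^{\alpha_1}})$ for suitable effective rates $\mu,\nu>0$. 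Both tails then lie in the class $\mathcal{ERPV}(-\infty)$, exactly as verified for the Weibull systems in Proposition~\ref{thm:WEIBULL}.

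Finally, choosing $c=(\mu/\nu)^{1/\alpha_1}$ aligns the exponents, so that $\bG_m(cx)=Ne^{-\mu x^{\alpha_1}}+o(e^{-\mu x^{\alpha_1}})$ and hence $\lim_{x\to+\infty}\bF_k(x)/\bG_m(cx)=1$. Theorem~\ref{NON-Convexity} then gives $\lim_{x\to+\infty}(\bG_m^{-1}(\bF_k(x))-cx)=0$, and since $\bG_m^{-1}(\bF_k(x))$ is plainly not linear, Theorem~\ref{CONV_Characterization} forces it to be neither convex nor concave, which is the claimed non-comparability of $X_{k:k}$ and $Y_{m:m}$ with respect to the convex transform order.

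I expect the main obstacle to be the bookkeeping in the second step: one must confirm that the F-G-M cross-terms and all the non-minimal-shape components are genuinely of smaller order, and, more delicately, that the matching of occurrence counts really produces equal leading coefficients $N$ on both sides — this is essential because $c$ can only correct the exponent, not the coefficient, in the ratio $\bF_k(x)/\bG_m(cx)$. Here the ordering of the shape parameters together with the identity $\alpha_1=\beta_1$ does the work. Verifying membership in $\mathcal{ERPV}(-\infty)$ for the dominant decay $e^{-\mu x^{\alpha_1}}$ is the other point requiring attention, although it can be imported essentially verbatim from the argument already used in Proposition~\ref{thm:WEIBULL}.
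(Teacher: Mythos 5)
Your proposal is correct and follows essentially the same route as the paper's own proof: identify $F_k$ and $G_m$, extract the leading tail asymptotics $Ne^{-\lambda_1^{\alpha_1}x^{\alpha_1}}$ and $Ne^{-\theta_1^{\beta_1}x^{\beta_1}}$ with $N$ the common number of occurrences of $\alpha_1=\beta_1$, note the F-G-M cross-terms decay strictly faster, verify $\mathcal{ERPV}(-\infty)$ membership, choose $c=\lambda_1/\theta_1$ (your $c=(\mu/\nu)^{1/\alpha_1}$) so that $\bF_k(x)/\bG_m(cx)\to1$, and conclude via Theorems~\ref{NON-Convexity} and~\ref{CONV_Characterization}. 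Your explicit marginalisation argument for why the F-G-M form survives on the indices $1,\ldots,m$ is a detail the paper simply asserts, but it does not change the method.
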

\begin{proof}
Similarly to the proof of the previous result, $F_k(x) = \prod_{i=1}^{k}(1-e^{-\lambda_i^{\alpha_i}x^{\alpha_i}})$ and $G_{m}(x) = \prod_{i=1}^{m}(1-e^{-\theta_i^{\beta_i}x^{\beta_i}})\left(1+\sum_{1\leq i<j\leq n}c_{ij}e^{-(\theta_i^{\beta_i}x^{\beta_i}+\theta_j^{\beta_j}x^{\beta_j})}\right)$, while the corresponding survival functions are given by 	$\bF_k(x) = Ne^{-\lambda_1^{\alpha_1} x^{\alpha_1}}+o(e^{-\lambda_1^{\alpha_1} x^{\alpha_1}})$ and $\bG_m(x) = Ne^{-\theta_1^{\beta_1} x^{\beta_1}}+ o(e^{-\theta_1^{\beta_1} x^{\beta_1}})$ and $N$ is the number of occurrences of $\alpha_1$ and $\beta_1$. Recalling that $\alpha_1= \beta_1$, the conclusion follows in a similar manner as in the results described earlier.
\end{proof}

\end{document}